\numberwithin{equation}{section}
\newtheorem{thm}{Theorem}[section]
\newtheorem{theorem}[thm]{Theorem}
\newtheorem{fact}[thm]{Fact}
\newtheorem{corollary}[thm]{Corollary}
\newtheorem{definition}[thm]{Definition}
\newtheorem{lemma}[thm]{Lemma}
\newtheorem{remark}[thm]{Remark}
\newenvironment{proof}[1][Proof]{\textbf{#1.} }{\ \rule{0.5em}{0.5em}}
\newcommand{\htwo}{H^{2|2}}
\newcommand{\hthree}{H^{3|2}}
\newcommand{\cut}{{\tt cut}}
\newcommand{\arcosh}{\operatorname{arcosh}}
\newcommand{\artanh}{\operatorname{artanh}}
\newcommand{\body}{\operatorname{body}}
\newcommand{\sk}[1]{\left\langle{#1}\right\rangle}
\newcommand{\cL}{\mathcal{L}}
\newcommand{\cA}{\mathcal{A}}
\newcommand{\cD}{\mathcal{D}}
\newcommand{\cH}{\mathcal{H}}
\newcommand{\B}{\mathcal{B}}
\newcommand{\cE}{{\mathcal E}} 
\newcommand{\cN}{{\mathcal N}} 
\newcommand{\R}{{\mathbb R}}  
\newcommand{\N}{{\mathbb N}}  
\newcommand{\Z}{{\mathbb Z}}
\newcommand{\T}{{\mathcal{T}}}
\def\moverlay{\mathpalette\mov@rlay}
\def\mov@rlay#1#2{\leavevmode\vtop{%
   \baselineskip\z@skip \lineskiplimit-\maxdimen
   \ialign{\hfil$\m@th#1##$\hfil\cr#2\crcr}}}
\newcommand{\charfusion}[3][\mathord]{
    #1{\ifx#1\mathop\vphantom{#2}\fi
        \mathpalette\mov@rlay{#2\cr#3}
      }
    \ifx#1\mathop\expandafter\displaylimits\fi}
\begin{document}
\thispagestyle{empty}

\begin{center}
  {\LARGE The non-linear supersymmetric hyperbolic sigma model on a complete graph with hierarchical interactions}\\[3mm]
{\large Margherita Disertori\footnote{Institute for Applied Mathematics
\& Hausdorff Center for Mathematics, 
University of Bonn, \\
Endenicher Allee 60,
D-53115 Bonn, Germany.
E-mail: disertori@iam.uni-bonn.de}
\hspace{1cm} 
Franz Merkl \footnote{Mathematisches Institut, Ludwig-Maximilians-Universit{\"a}t  M{\"u}nchen,
Theresienstr.\ 39,
D-80333 Munich,
Germany.
E-mail: merkl@math.lmu.de
}
\hspace{1cm} 
Silke W.W.\ Rolles\footnote{
Zentrum Mathematik, Bereich M5,
Technische Universit{\"{a}}t M{\"{u}}nchen,
Boltzmannstr.\ 3,
D-85748 Garching bei M{\"{u}}nchen,
Germany.
E-mail: srolles@ma.tum.de}
\\[3mm]
{\small \today}}\\[3mm]
\end{center}

\begin{abstract}
We study the non-linear supersymmetric hyperbolic sigma model $\htwo$
on a complete graph with hierarchical interactions. 
For interactions which do not decrease too fast in the hierarchical 
distance, we prove tightness
of certain spin variables in horospherical coordinates, uniformly
in the pinning and in the size of the graph. The proof relies on 
a reduction to an effective $\htwo$ model; its size is logarithmic 
in the size of the original model.
\footnote{2020 Mathematics Subject Classification: Primary: 82B20,
  Secondary: 60G60.}
\footnote{Keywords: non-linear
  supersymmetric hyperbolic sigma model, hierarchical interactions,
  supersymmetry, effective model.} 
\end{abstract}

%\tableofcontents

\section{Introduction}

\subsection{History of the $\htwo$-model and related models}
The supersymmetric hyperbolic sigma model, $\htwo$-model for short, 
was introduced by Zirnbauer in \cite{zirnbauer-91} as a 
``toy model for studying diffusion and localization in disordered 
one-electron systems''. It is a supersymmetric variant of a statistical 
mechanics model on a finite graph. The model deals with spin variables 
taking values in the 
supermanifold $\htwo$ with two Grassmann components over the hyperboloid 
$H^2=\{(x,y,z)\in\R^3:x^2+y^2-z^2=-1, z>0\}$. The model has supersymmetries, which 
extend the generators of the Lorentz group acting on $H^2$. In 
\cite{disertori-spencer-zirnbauer2010},
Disertori, Spencer, and Zirnbauer examine this model over boxes in $\Z^d$, 
$d\ge 3$. For sufficiently small temperature, they derive moment estimates
and conclude that the spins are aligned with high probability. 
For high temperature in any dimension $d$, Disertori and Spencer 
\cite{disertori-spencer2010} show exponential decay of the $y$-$y$-two-point function 
for the $\htwo$-model. The result holds for any temperature in dimension one.
In two dimensions, polynomial decay of the correlations was shown for all 
temperatures independently by Kozma and Peled \cite{kozma-peled2021} and Sabot
\cite{sabot-polynomial-decay2021}, using different tools. 

Surprisingly, although the model was originally designed to describe 
condensed matter systems, there is a close connection with vertex-reinforced 
jump processes as was observed by Sabot and Tarr\`es \cite{sabot-tarres2012}. 
All spin variables of the $\htwo$-model written in horospherical coordinates
have a probabilistic interpretation in terms of the vertex-reinforced jump
process as is worked out in \cite{merkl-rolles-tarres2019}. 
 
There is another surprising connection of the $\htwo$-model to certain random 
Schrödinger operators discovered by Sabot, Tarr\`es, and Zeng in 
\cite{sabot-tarres-zeng2017} and \cite{sabot-zeng15}. For the analysis of 
these random Schrödinger operators, certain martingales derived from the
spin variables of the $\htwo$-model play an essential role. These 
martingales are generalized in \cite{disertori-merkl-rolles2017}
and supersymmetric variants of them are described in 
\cite{disertori-merkl-rolles2019}. 
 
The classical isomorphism theorems relating the local times of a continuous-time 
random walk to the square of a Gaussian free field are generalized by 
Bauerschmidt, Helmuth, and Swan in \cite{bauerschmidt-helmuth-swan-dynkin-isomorphism2021}
to spin systems taking values in hyperbolic and spherical geometries.

There are generalizations of the $\htwo$-model, called $H^{p|2q}$-models, 
which deal with $p$ real-valued spin variables and $2q$ Grassmann variables per vertex. 
For $p=2$, these models are examined first by Crawford in \cite{crawford2021}. 
The case of the $H^{0|2}$-model, 
which describes the arboreal gas (configurations of unrooted spanning forests),
and the closely related $H^{2|4}$-model
are examined by Bauerschmidt, Crawford, Helmuth, and Swan in 
\cite{bauerschmidt-crawford-helmuth-swan-spanning-forests2021} and 
\cite{bauerschmidt-crawford-helmuth-percolation-transition2021}.

In particular, the $H^{0|2}$-model can be represented by a purely Fermionic
model with a Gaussian measure perturbed by a short range interaction. This
allows to apply a rigorous renormalization group analysis in 
\cite{bauerschmidt-crawford-helmuth-percolation-transition2021}.
It is not clear how to extend such an analysis to the $\htwo$-model, the 
main obstruction being the hyperbolic structure and the presence
of long range interactions. If one considers the corresponding 
vertex-reinforced jump process on hierarchical lattices, informal computations 
by Abdesselam, Brydges, Helmuth, Lupu, Sabot, Swan, and Zeng
done during an AIM workshop\footnote{AIM Workshop on 
Self-interacting processes, supersymmetry, and Bayesian statistics 
organized by P.\ Diaconis, M.\ Disertori, C.\ Sabot, and P.\ Tarr\`es, 2019,\\
https://aimath.org/pastworkshops/selfsuperbayes.html} 
suggest, for a special choice of the pinning and the interactions, the existence of a 
renormalization semigroup operation, which leaves the 
process invariant under appropriate scaling. 

A recent survey on the subject is given by Bauerschmidt and Helmuth in 
\cite{bauerschmidt-helmuth-survey2021}.

\subsection{The $H^{2|2}$-model on general graphs}

\paragraph{Overview of the paper.}
In this paper, we deal with the $\htwo$-model on a complete graph of size $2^N$
with interactions depending on the hierarchical distance in a binary tree. 
For some observables this model can be reduced to an effective $\htwo$-model
with only $O(N)$ vertices. This reduction is described in Theorem 
\ref{thm:reduced-model} below. As an application of this reduction, we prove
tightness of differences of certain spin variables in the original model, 
provided that the interactions do not decay too fast with the hierarchical
distance. This is made precise in Theorem \ref{thm:main}. 

\paragraph{Introduction of the model.}
Let $\Lambda$ be a finite non-empty set. We view it as the vertex set of a
complete undirected graph with edge set $\cE_\Lambda:=\{\{i,j\}:i,j\in\Lambda\}$. 
To construct the supermanifold $\htwo$, we introduce a Grassmann algebra 
$\cA=\cA_0\oplus\cA_1$ with the even 
(commuting) subalgebra $\cA_0$ and the odd (anticommuting) subspace 
$\cA_1$; see \cite{phdthesis-swan2020} for more details. To keep the 
exposition self-contained, we also give some details in the appendix. 
We assume that $\cA$ is large enough 
so that we can associate to each vertex $i\in\Lambda$ three variables 
$x_i,y_i,z_i$ taking values in $\cA_0$ and linearly 
independently two odd (Grassmann) variables $\xi_i,\eta_i\in\cA_1$.
Each even element $x\in\cA_0$ can be decomposed in a unique way in a 
real part $\body(x)\in\R$ and a nilpotent part $x-\body(x)$. 
We define $\hthree:= \cA_0^3\times\cA_1^2$ and endow it with the inner product 
\begin{align}
\label{eq:def-inner-product}
\sk{v,v'}:=xx'+yy'-zz'+\xi\eta'-\eta\xi'
\end{align}
for $v=(x,y,z,\xi,\eta),v'=(x',y',z',\xi',\eta')\in\hthree$. 
The supermanifold $\htwo$ can now be defined by 
\begin{align}
\htwo=&\{v=(x,y,z,\xi,\eta)\in\hthree:\,\sk{v,v}=-1,\body(z)>0\}\nonumber\\
=&\{(x,y,z,\xi,\eta)\in\hthree:\, x^2+y^2-z^2+2\xi\eta=-1,\body(z)>0\}.
\end{align}

For any smooth function $f:\R^k\to\R$ the corresponding
extension to a superfunction $f:\cA_0^k\to\cA_0$ is denoted by the same
symbol $f$ and is constructed by a Taylor expansion in the nilpotent parts. 

The superintegration form $\cD v$ on $H^{2|2}$ is 
defined by 
\begin{align}
f\mapsto\int_{H^{2|2}}\cD v f(v):=
\frac{1}{2\pi}\int_{\R^2}dx\, dy\,
\partial_\xi\partial_\eta\left(\frac{1}{z}f(x,y,z,\xi,\eta)\right),
\end{align}
where
\begin{align}
  z=\sqrt{1+x^2+y^2+2\xi\eta}
=\sqrt{1+x^2+y^2}+\frac{\xi\eta}{\sqrt{1+x^2+y^2}}
\end{align}
according to the constraint $v\in H^{2|2}$. Here, $f$ is any superfunction 
decaying sufficiently fast to make the integral well-defined. 

We abbreviate 
\begin{align}
\label{eq:def-o}
o:=(0,0,1,0,0)\in H^{2|2}. 
\end{align}
We take a weight function $W:\cE_\Lambda\to[0,\infty)$, $\{i,j\}\mapsto W_{ij}=W_{ji}$ with $W_{ii}=0$ for all $i\in\Lambda$ 
and consider the set of edges 
\begin{align}
\label{eq:E-Lambda-W}
\cE_{\Lambda,W}:=\{\{i,j\}\in\cE_\Lambda:W_{ij}>0\}.
\end{align}
Furthermore, we take a pinning strength function $h:\Lambda\to[0,\infty)$, 
$i\mapsto h_i$. Throughout the paper, we assume that it fulfills 
$h_i>0$ for at least one $i$ in every connected component of the graph 
$(\Lambda,\cE_{\Lambda,W})$. This connectivity assumption guarantees that 
the following $\htwo$-model is well-defined:
its action with couplings $W$ and pinning $h$ 
is defined for $v_\Lambda:=(v_j)_{j\in\Lambda}\in(\htwo)^\Lambda$ by 
\begin{align}
S_{W,h}^\Lambda(v_\Lambda):=
&-\frac12\sum_{i,j\in\Lambda}W_{ij}\sk{v_i-v_j,v_i-v_j}
  +\sum_{i\in\Lambda}h_i(1+\sk{v_i,o})\nonumber\\
=&\sum_{i,j\in\Lambda}W_{ij}(1+\sk{v_i,v_j})
  +\sum_{i\in\Lambda}h_i(1+\sk{v_i,o}).
\label{eq:action-S}
\end{align}
The $H^{2|2}$-model
over $\Lambda$ is given by the superintegration form 
\begin{align}
f\mapsto E_{W,h}^\Lambda[f(v_\Lambda)]:=\int_{(H^{2|2})^\Lambda}\cD v_\Lambda
\left( e^{S^\Lambda_{W,h}(v_\Lambda)} f(v_\Lambda)\right)
\label{eq:def-superintegration-form}
\end{align}
with the abbreviation
\begin{align}
\label{eq:Dv-Lambda}
\cD v_\Lambda:= \prod_{j\in\Lambda}\cD v_j.
\end{align}
By supersymmetry, $E^\Lambda_{W,h}[1]=1$ for the constant function 
$f(v_\Lambda)\equiv 1$, see formula (5.1) in \cite{disertori-spencer-zirnbauer2010}. 

\paragraph{Horospherical coordinates.}
For any vertex $i\in\Lambda$ and $v_i=(x_i,y_i,z_i,\xi_i,\eta_i)\in\htwo$, 
we introduce horospherical coordinates consisting of even components 
$u_i,s_i$ and Grassmann variables $\overline\psi_i$, $\psi_i$ as follows, 
cf.\ Section 2.2 in \cite{disertori-spencer-zirnbauer2010}: 
\begin{align}
u_i=\log (x_i+z_i),\quad 
 s_i=\frac{y_i}{x_i+z_i},\quad
\overline\psi_i=\frac{\xi_i}{x_i+z_i},\quad  
\psi_i=\frac{\eta_i}{x_i+z_i}. 
\end{align}
The inverse transformation is given by 
\begin{align}
\label{eq:change-of-coordinates1}
& x_i=\sinh u_i-\left(\frac12s_i^2+\overline\psi_i\psi_i\right)e^{u_i}, \quad
y_i=s_ie^{u_i}, \quad
\xi_i=\overline\psi_i e^{u_i}, \quad
\eta_i=\psi_i e^{u_i}, \\
\label{eq:change-of-coordinates2}
& z_i=\sqrt{1+x^2_i+y^2_i+2\xi_i\eta_i}=\cosh u_i+\left(\frac12s_i^2+\overline\psi_i\psi_i\right)e^{u_i}.
\end{align}
In these coordinates, the even components 
$(u_\Lambda,s_\Lambda)$ restricted to $\R^\Lambda\times\R^\Lambda$ have 
the following probabilistic interpretation: there is a unique 
probability measure $P^\Lambda=P_{W,h}^\Lambda$ on $(\R^\Lambda)^2$ 
such that for any smooth bounded test function $f:(\R^\Lambda)^2\to\R$
one has 
\begin{align}
  \int_{(\R^\Lambda)^2}f\, dP_{W,h}^\Lambda
=E_{W,h}^\Lambda[f(u_\Lambda(v_\Lambda),s_\Lambda(v_\Lambda))].
\end{align}
Using the abbreviations
\begin{align}
\label{eq:def-Bij}
& B_{ij}:=\cosh(u_i-u_j)+\frac12(s_i-s_j)^2e^{u_i+u_j}, \quad
B_j:=\cosh u_j+\frac12 s_j^2e^{u_j}, \\
& M_{ij}:=1_{\{i=j\}}\Big(h_ie^{u_i}+\sum_{k\in\Lambda}W_{ik}e^{u_i+u_k}\Big)
-1_{\{i\neq j\}}W_{ij}e^{u_i+u_j}
\end{align}
for $i,j\in\Lambda$, the corresponding probability density equals
\begin{align}
\label{eq:P-in-horosph-coor}
  P_{W,h}^\Lambda(du_\Lambda\, ds_\Lambda)
=e^{-\frac12\sum_{i,j\in\Lambda}W_{ij}(B_{ij}-1)}
e^{-\sum_{i\in\Lambda}h_i(B_i-1)}\det (M_{ij})_{i,j\in\Lambda}
\prod_{i\in\Lambda}\frac{e^{-u_i}\, du_i\, ds_i}{2\pi} ,
\end{align}
where $du_i\, ds_i$ is the Lebesgue measure on $\R^2$, cf.\ 
\cite{disertori-spencer-zirnbauer2010}. 
In the rest of the paper we mostly work with the $v$-variables.

\section{Hierarchical $\htwo$-model and results} 
\subsection{Definition and tightness}
\label{subsec:hierarchical-h22}
We now consider the special case
that $\Lambda$ consists of $2^N$ vertices with some given $N\in\N_0$, where
every vertex $i$ interacts with every other vertex $j$ in a hierarchical manner.
The hierarchical structure is best visualized by the binary tree with set of leaves
equal to the vertex set $\Lambda$. 
More precisely, for any $n\in\N_0$, let $\Lambda_n=\{0,1\}^n$ be the set of binary words of 
length $n$. In particular, $\Lambda_0=\{\emptyset\}$ and $\Lambda=\Lambda_N$. The reader may 
visualize the words as binary numbers $0,\ldots,2^N-1$ with reversed order
of digits.
Let
$\T^N:=\bigcup_{n=0}^N\Lambda_n$
be the vertex set of a binary tree of height $N$ with root $\emptyset$. 
For $j\in\Lambda_n$, let $|j|=n$ be the length of the word $j$, which is 
the distance of $j$ from the root.
The elements of $\Lambda_N$ are the leaves of the tree. 
An illustration of the tree $\T^3$ is given in Figure \ref{fig1}. 

\begin{figure}
\centering
%\resizebox{10cm}{!}{\input }
\includegraphics[width=0.8\textwidth]{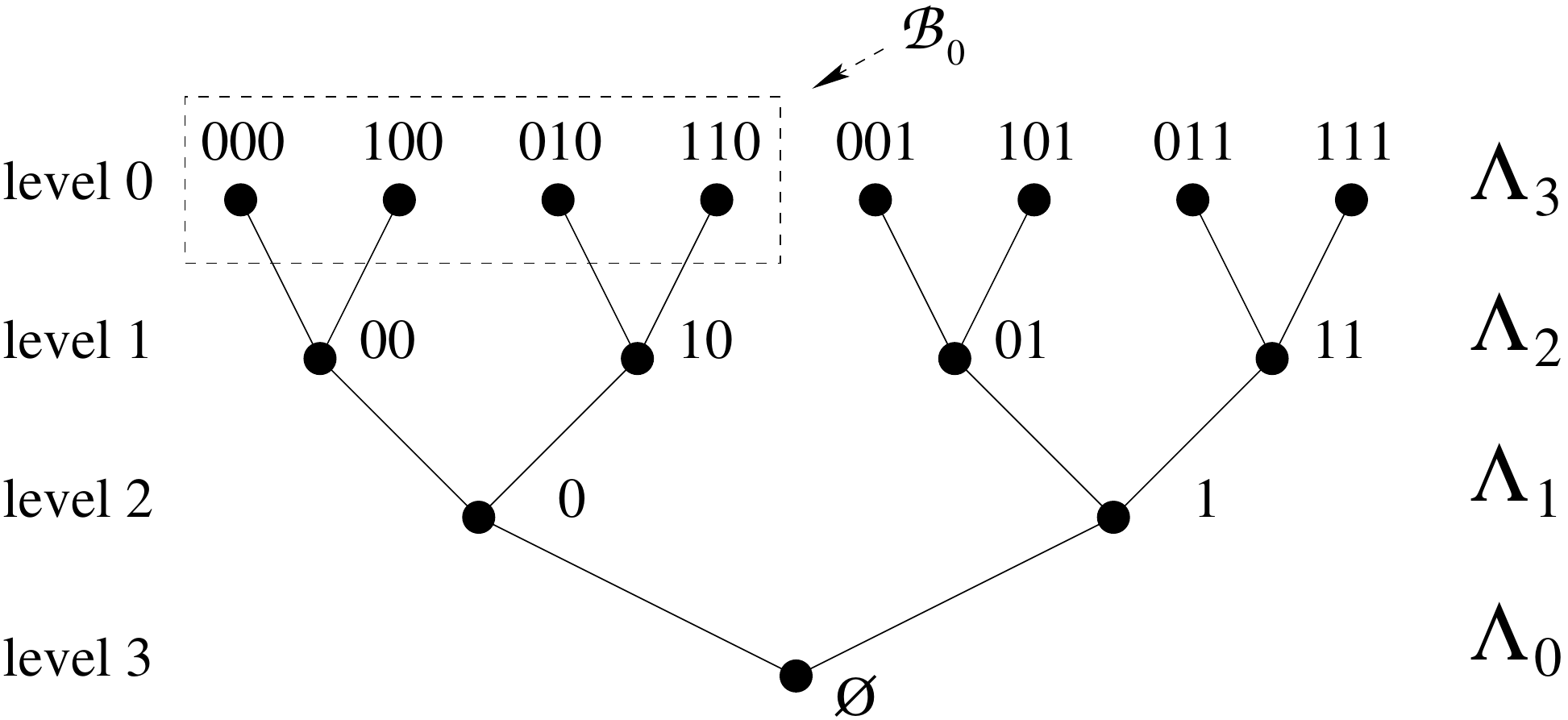}
\caption{The tree $\T^3=\Lambda_3\cup\Lambda_2\cup\Lambda_1\cup\Lambda_0$.}
\label{fig1}
\end{figure}

For $j=(j_0,\ldots,j_{|j|-1})\in\T^N\setminus\{\emptyset\}$, let 
$\cut(j):=(j_1,\ldots,j_{|j|-1})$ denote the vertex directly below $j$. 
The \emph{hierarchical distance} between $i,j\in\Lambda_N$ is defined as
\begin{align}
d(i,j):=\min\{l\in\N_0:\cut^l(i)=\cut^l(j)\}.
\end{align}
Here, $\cut^l$ denotes the $l$-fold iteration of $\cut$ and $\cut^0$ is the 
identity map. The hierarchical distance is the distance to the first point 
in $\T^N$ where the shortest paths from $i$ and $j$ to the root meet. 
It is also half the graph distance between $i$ and $j$ in the tree graph
corresponding to $\T^N$. The \emph{minimum} of $i,j\in\Lambda_N$ is defined by
\begin{align}
i\wedge j:=\cut^{d(i,j)}(i)=\cut^{d(i,j)}(j).   
\end{align}
More generally, for $i,j\in\T^N$, we define $i\wedge j$ to be the least
common ancestor of $i$ and $j$, which equals $\cut^m(i)$ with the smallest
$m\in\N_0$ such that $\cut^m(i)=\cut^{m'}(j)$ for some $m'\in\N_0$. 

Take a weight function $w:\N_0\to[0,\infty)$. It encodes
the weight function $W:\cE_{\Lambda_N}\to[0,\infty)$ on the set of edges $\cE_{\Lambda_N}$
as follows: for $i,j\in\Lambda_N$, 
\begin{align}
\label{eq:def-W-hierarchical}
W_{ij}:=w(d(i,j)). 
\end{align}
Thus, it depends only on the hierarchical distance of $i$ and $j$. In this paper
we study the $\htwo$-model on $\Lambda=\Lambda_N$ with hierarchical weight function 
$W$ given
by \eqref{eq:def-W-hierarchical} and a pinning function $h=(h_i)_{i\in\Lambda_N}$. 
According to formula \eqref{eq:action-S}, the action is given by
\begin{align}
S_{W,h}^{\Lambda_N}(v_{\Lambda_N})
=&\sum_{i,j\in\Lambda_N}w(d(i,j))(1+\sk{v_i,v_j})
  +\sum_{i\in\Lambda_N}h_i(1+\sk{v_i,o}).
\label{eq:action-S-hierarchical}
\end{align}
Important particular cases are constant pinning $h=\epsilon 1=(\epsilon)_{i\in\Lambda_N}$ 
and pinning at one point ${i_0}\in\Lambda_N$, i.e.\ 
$h=\epsilon\delta_{i_0}=(\epsilon 1_{\{i={i_0}\}})_{i\in\Lambda_N}$, with some 
pinning strength $\epsilon>0$. We show that for this particular choice of the
pinning the horospherical 
coordinates $(u_i)_{i\in\Lambda_N}$ are tight with respect to $P^{\Lambda_N}_{W,h}$, 
uniformly in $N$ and the pinning strength $\epsilon>0$ if the weight function 
$w$ does not decrease too fast.

\begin{theorem}[Tightness]
\label{thm:main}
Let $w:\N_0\to(0,\infty)$ and $W:\cE_{\Lambda_N}\to(0,\infty)$ be weight functions
related by \eqref{eq:def-W-hierarchical} such that 
the numbers $\beta_l:=2^{2l+1}w(l+2)$, $l\in\N_0$, fulfill 
\begin{align}
\label{eq:assumption-main-thm}
  \sum_{l=0}^\infty\sqrt{\frac{\log\max\{\beta_l,e\}}{\beta_l}}<\infty.
\end{align}
Then one has 
\begin{align}
\label{eq:claim-main-thm}
\lim_{M\to\infty}\sup_{N\in\N}\sup_h\sup_{p,q\in\Lambda_N}P^{\Lambda_N}_{W,h}(|u_p-u_q|\ge M)
=0,
\end{align}
where the supremum over $h$ is taken over 
all uniform pinning functions $h=\epsilon 1$ and all pinning functions at one point
$h=\epsilon\delta_{i_0}$ with $\epsilon>0$ and ${i_0}\in\Lambda_N$. 
\end{theorem}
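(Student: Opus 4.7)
The plan is to leverage the effective-model reduction of Theorem \ref{thm:reduced-model} to replace the original $2^N$-vertex model by an $H^{2|2}$-model on a graph of logarithmic size built from the tree $\T^N$, and then control each $u$-difference in the effective model scale by scale along the tree, summing the level-$l$ fluctuations controlled by $\beta_l$.

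First, I apply Theorem \ref{thm:reduced-model} to rewrite $P^{\Lambda_N}_{W,h}(|u_p-u_q|\ge M)$ as a probability for effective horospherical coordinates $(U_j)_{j\in\T^N}$ indexed by the nodes of the binary tree, with effective hierarchical couplings at each level inherited from $w$ through the combinatorics of descendants; a level-$l$ pair of neighbouring coarse subtrees collects of order $2^{2l+1}$ original pairs of weight $w(l+2)$, which explains the appearance of $\beta_l$. The difference then telescopes along the tree path through $p\wedge q$ as
\begin{align*}
u_p-u_q=\sum_{l=0}^{d(p,q)-1}\bigl(U_{\cut^l(p)}-U_{\cut^{l+1}(p)}\bigr)-\sum_{l=0}^{d(p,q)-1}\bigl(U_{\cut^l(q)}-U_{\cut^{l+1}(q)}\bigr),
\end{align*}
so it suffices to control the single-scale increments $|U_{\cut^l(r)}-U_{\cut^{l+1}(r)}|$ for $r\in\{p,q\}$.

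Second, I would bound each single-scale increment by a single-edge $H^{2|2}$ estimate at coupling $\beta_l$. Since $B_{ij}\ge\cosh(u_i-u_j)$ by \eqref{eq:def-Bij}, and the density \eqref{eq:P-in-horosph-coor} makes $\sum_{i,j}W_{ij}(B_{ij}-1)$ a natural exponential penalty, the supersymmetric moment identities of \cite{disertori-spencer-zirnbauer2010} together with a Markov / exponential Chebyshev step should yield a tail of the shape
\begin{align*}
P\bigl(|U_{\cut^l(r)}-U_{\cut^{l+1}(r)}|\ge t\bigr)\le C\exp\bigl(-c\sqrt{\beta_l}\,t\bigr).
\end{align*}
Choosing $t_l=K\sqrt{\log\max\{\beta_l,e\}/\beta_l}$ with $K$ large, these probabilities become summable in $l$, and under assumption \eqref{eq:assumption-main-thm} the quantity $\sum_l t_l$ is finite, giving a uniform-in-$N$ bound on $|u_p-u_q|$ with arbitrarily high probability. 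A monotonicity argument in $h$ (the action \eqref{eq:action-S} is linear and non-increasing in each $h_i$ on the relevant event) should allow to reduce to the two admissible pinning shapes simultaneously.

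The main obstacle is the second step: the single-scale deviation estimate must be established \emph{inside} the full effective model, where the edge of interest is coupled to every other scale and inherits an effective pinning whose strength is not easily controlled a priori. The natural route is to combine the recursive hierarchical structure produced by Theorem \ref{thm:reduced-model} with the supersymmetric Ward identities and moment bounds of \cite{disertori-spencer-zirnbauer2010} and \cite{disertori-spencer2010}, integrating out one scale at a time so that the single-edge estimate at level $l$ becomes an estimate in a simpler conditional model in which only the coupling $\beta_l$ and the effective pinning from below survive.
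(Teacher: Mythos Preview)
Your proposal has a structural gap in the first step, before you ever reach the obstacle you flag. Theorem~\ref{thm:reduced-model} does \emph{not} produce an effective $H^{2|2}$-model with variables indexed by all of $\T^N$; it produces one on a \emph{maximal antichain} $A\subseteq\T^N$. The block spins $v_j$ of \eqref{eq:def-block-spin} for non-leaf $j$ are just averages and do not satisfy $\langle v_j,v_j\rangle=-1$, so there are no intrinsic horospherical coordinates $U_j$ with the $H^{2|2}$ behaviour you would need. Your telescoping uses $U_{\cut^l(p)}$ for all $l=0,\dots,d(p,q)$ simultaneously; these vertices are ancestors of one another and hence can never lie in a common antichain. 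Thus there is no single effective $H^{2|2}$-model containing them all, and the ``single-scale increment estimate inside the full effective model'' you try to set up in step two has no model in which it can even be formulated.

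The paper's resolution is to build a maximal antichain that contains $p$ and $q$ as leaves together with the \emph{siblings} $\overline{\cut^l(p)}$ and $\overline{\cut^l(q)}$ at each relevant level; siblings branching off at distinct levels are pairwise incomparable, so this is a genuine antichain, and it is automatically compatible with $h=\epsilon 1$ and with $h=\epsilon\delta_p$ (the general pinning point $i_0$ is reduced to $i_0=p$ by a triangle inequality, not by monotonicity in $h$). Corollary~\ref{cor:distribution-effective-model} then transfers the law of $(u_p,u_q)$ to the effective model on this antichain, and one joins $p$ to $q$ by a path $\pi$ through the siblings \emph{inside} $A$. Each edge $\{\pi_l,\pi_{l+1}\}$ carries effective weight of order $\beta_l$, and is controlled by the elementary bound of Lemma~\ref{le:estimate-Bij}, namely $P(B_{ij}\ge 1+\delta)\le W_{ij}\delta\,e^{-(W_{ij}\delta-1)}$. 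Since $\cosh(u_i-u_j)\le B_{ij}$, this gives a Gaussian-type tail $\exp(-c\,\beta_l t^2)$ for $|u_{\pi_l}-u_{\pi_{l+1}}|\ge t$, not the $\exp(-c\sqrt{\beta_l}\,t)$ you posited (and with your shape, your choice $t_l=K\sqrt{\log\beta_l/\beta_l}$ would not even make the tail probabilities summable under~\eqref{eq:assumption-main-thm}). With the correct shape, choosing $\delta_l$ so that $\beta_l\delta_l\sim\log\sqrt{\beta_l}$ makes both $\sum_l\arcosh(1+\delta_l)$ and the tail probabilities summable precisely under hypothesis~\eqref{eq:assumption-main-thm}. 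No scale-by-scale conditional integration and no appeal to the deeper moment machinery of \cite{disertori-spencer-zirnbauer2010,disertori-spencer2010} is needed: once the right antichain is in hand, the proof is a union bound along a path of length $O(d(p,q))$.
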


A key tool for this result is a representation of expectations 
in terms of an effective reduced model.

\subsection{Effective model} \label{section2.2}
It turns out that 
the value $E^{\Lambda_N}_{W,h}[e^{\sum_{j\in\Lambda}\sk{a_j,v_j}}]$ of the superintegration form 
agrees, for any suitable $a_j$ and $h,$ with the corresponding value of the superintegration 
form for an $\htwo$-model with a smaller vertex set, rescaled weights 
and rescaled pinning. While the original model has $|\Lambda_N|=2^N$
vertices, the effective model used in the proof of Theorem \ref{thm:main}
has only $O(N)$ vertices, resulting in an exponential reduction of 
complexity; see Remark \ref{rem:exp-complexity} for more details. 
To define this effective model we need to introduce some notation.

\paragraph{Block spin variables. }
For $i,j\in\T^N$, we write $i\preceq j$ if $i=\cut^l(j)$ for some
$l\in\N_0$, which means that $j$ is an extension of $i$.  
Let $\B_i:=\{j\in\Lambda_N:i\preceq j\}$ denote the set of leaves above $i$
in $\Lambda_N$. In particular, $\B_\emptyset=\Lambda_N$. 
For any vertex $j\in\T^N$, let 
\begin{align}
\ell(j):=N-|j|  
\end{align}
denote its level. 
All vertices in $\Lambda_N$ are at level 0 and the root $\emptyset$ 
is at level $N$; see Figure \ref{fig1} for an illustration. 
Note that for 
$i\in\Lambda_n$, one has $|\B_i|=2^{N-n}=2^{\ell(i)}$ and for $i,j\in \Lambda_{N}$ we have
$\ell(i\wedge j)=d (i,j)$. 
Starting with the variables $v_j$ with $j\in\Lambda_N$ at level $0$, we 
attach \emph{block spin variables} to all vertices of the binary tree $\T^N$ 
as follows. For $j\in\T^N$, let
\begin{align}
v_j=(x_j,y_j,z_j,\xi_j,\eta_j)
:=\frac{1}{|\B_j|}\sum_{i\in \B_j}v_i
=\frac{1}{2^{\ell(j)}}\sum_{i\in \B_j}v_i.
\label{eq:def-block-spin}
\end{align}
For $j\in\Lambda_N$, this redefinition preserves the original meaning of $v_j$. 

\paragraph{Antichains.}
For the effective model, we choose the vertex set $A\subseteq\T^N$ 
to be an antichain, meaning that for all $i,j\in A$ with 
$i\neq j$ one has $i\not\preceq j$ and $j\not\preceq i$. An antichain 
$A$ is called maximal if for any antichain $A'\supseteq A$ one has $A'=A$. 
Equivalently, $A\subseteq\T^N$ is a maximal antichain if and only if for 
every leaf $i\in\Lambda_N$ there is precisely one $j\in A$ with 
$j\preceq i$. For a maximal antichain $A$, the set 
$A^\uparrow:=\{i\in\T^N:j\preceq i\text{ for some }j\in A\}$ consists of 
all vertices above $A$. 
Given two maximal antichains $A$ and $A'$, we write $A\preceq A'$ if 
$A'\subseteq A^\uparrow$. The notation $A\prec A'$ means that 
$A\preceq A'$ and $A\neq A'$. 

\paragraph{Rescaled weights and pinning.}
We extend the weight function $W:\cE_{\Lambda_N}\to[0,\infty)$ to
$W:\{\{i,j\}:i,j\in\T^N\}\to[0,\infty)$ by
\begin{align}
  W_{ij}:=2^{\ell(i)+\ell(j)}w(\ell(i\wedge j)).
\label{eq:extension-def-W}
\end{align}
Note that, for $i,j\in \Lambda_{N}$ this definition coincides with the original weight
function $W_{ij}= w(d (i,j)).$

Assume now $h:\Lambda_N\to[0,\infty)$ is a pinning function such that $h_{j}>0$ for at least one point
in each connected component of the graph 
$(\Lambda_{N},\cE_{\Lambda_{N},W})$  (cf. remarks below  \eqref{eq:E-Lambda-W}).
We call a maximal antichain $A$ \emph{compatible with $h$},
if for all $j\in A$ the restriction 
$h|_{\B_j}$ of $h$ to $\B_j$ is constant; see Figure \ref{fig:antichain2}
for an illustration. 

Given a maximal antichain $A$ compatible with a pinning function 
$h$, we define an extended pinning function $H:A^\uparrow\to[0,\infty)$ by setting
\begin{align}
  H_j=\sum_{i\in \B_j}h_i\text{ for }j\in A^\uparrow.
\label{eq:def-H}
\end{align}
Note that all summands in the last sum are equal and hence 
$H_j=2^{\ell(j)}h_i$ for any $i\in \B_j$, $j\in A^\uparrow$.

For any $\Lambda\subseteq A $ the notation 
$S^\Lambda_{W,H}$, $E^\Lambda_{W,H}$ 
means  the weight function $W$ is restricted to the underlying edge set $\cE_\Lambda$ 
and the pinning $H$ is restricted to the vertex set $\Lambda$. 

\paragraph{Reduction of the model.}
We now compare the hierarchical $\htwo$-model over the set $\Lambda_N$ 
of leaves of the tree $\T^N$ with arbitrary pinning $h$ 
with the $\htwo$-model over a maximal antichain $A\subseteq\T^N$ compatible
with $h$. 

Recall the even and odd Grassmann subspaces $\cA_0$ and $\cA_1$, 
respectively, and $\hthree=\cA_0^3\times\cA_1^2$
endowed with the inner product $\sk{\cdot,\cdot}$ defined in
equation \eqref{eq:def-inner-product}. 
We extend the comparison relations $<, \le, >, \ge$ on $\R$ to even elements 
$a,b\in\cA_0$ by defining $a<b$ to mean $\body(a)<\body(b)$, and similarly 
for $\le$, $>$, and $\ge$. 
We abbreviate 
\begin{align}
\hthree_+:=&\{v=(x,y,z,\xi,\eta)\in\hthree:\sk{v,v}<0,z>0\}, \\
\hthree_{+0}:=&\{v=(x,y,z,\xi,\eta)\in\hthree:\sk{v,v}\le 0,z\ge 0\}. 
\end{align}

For the next theorem and its corollary, 
consider the extended weight function $W$ from \eqref{eq:extension-def-W}, 
a pinning function $h:\Lambda_N\to[0,\infty)$, a maximal 
antichain $A\subseteq\T^N$ being compatible with $h$, and the extension 
$H$ of $h$ given in \eqref{eq:def-H}. 
In analogy to \eqref{eq:E-Lambda-W}, we consider the graph $(A,\cE_{A,W})$ 
with edge set $\cE_{A,W}:=\{\{i,j\}: i,j\in A,W_{ij}>0\}$. 

\begin{figure}
\centering
%\resizebox{10cm}{!}{\input }
\includegraphics[width=0.8\textwidth]{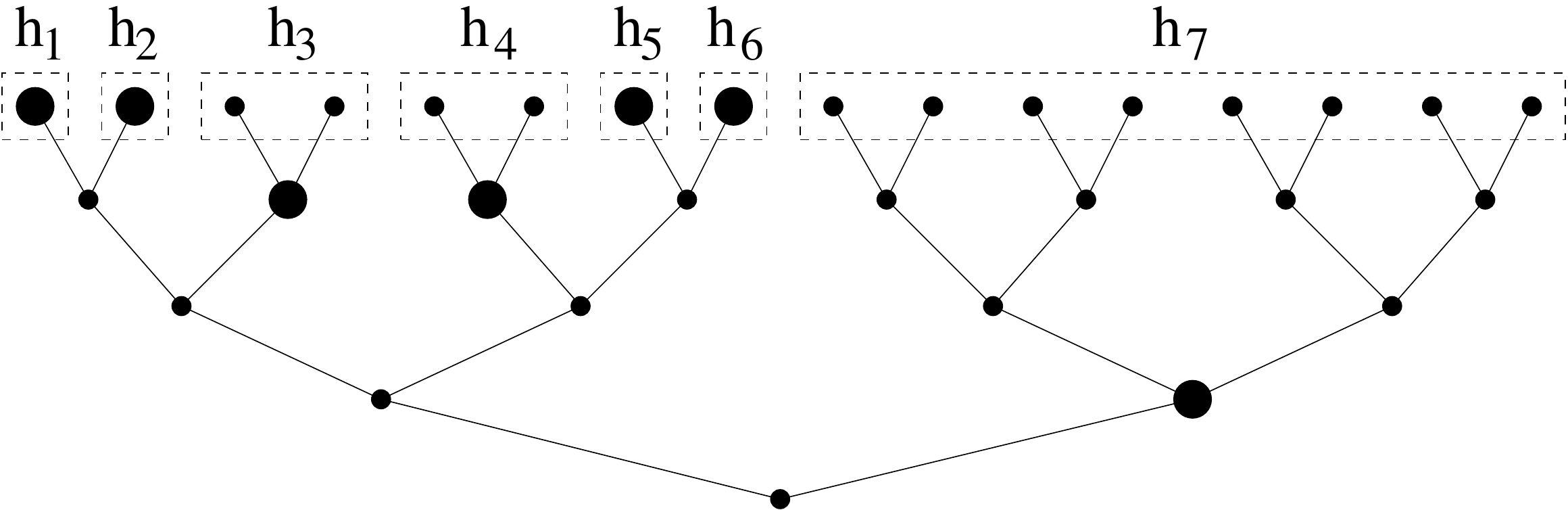}
\caption{The antichain consisting of the big dots is compatible with pinning 
functions given by $h_i$, $1\le i\le 7$.}
\label{fig:antichain2}
\end{figure}

\begin{theorem}[Reduction to the effective model]
\label{thm:reduced-model}
For all $(a_j)_{j\in A}\in(\hthree)^A$ such that $H_jo+a_j\in\hthree_{+0}$ for 
all $j\in A$ and $H_jo+a_j\in\hthree_+$ for at least one $j$
in every connected component of the graph $(A,\cE_{A,W})$, one has 
\begin{align}
E^{\Lambda_N}_{W,h}[e^{\sum_{j\in A}\sk{a_j,v_j}}]=E^A_{W,H}[e^{\sum_{j\in A}\sk{a_j,v_j}}].
\label{eq:expectation-reduced-model}
\end{align}
\end{theorem}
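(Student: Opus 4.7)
$\textbf{Proof plan.}$ I prove \eqref{eq:expectation-reduced-model} by downward induction on the maximal antichain, reformulating the observable in a spin-by-spin form. For any maximal antichain $A''$ compatible with $h$ with $A\preceq A''$, define effective sources $c^{A''}_i := (|\B_i|/|\B_{j(i)}|)\,a_{j(i)}$ for $i\in A''$, where $j(i)\in A$ is the unique element with $j(i)\preceq i$. Then $\sum_{j\in A}\sk{a_j, v_j^{\text{block}}} = \sum_{i\in A''}\sk{c^{A''}_i, v_i}$ after summing via block spins down to the $A''$-level, so $F(A'') := E^{A''}_{W, H''}[\exp(\sum_{i\in A''}\sk{c^{A''}_i, v_i})]$ specializes to the left-hand side of \eqref{eq:expectation-reduced-model} for $A'' = \Lambda_N$ and to the right-hand side for $A'' = A$. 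It then suffices to show that $F$ is constant as $A''$ varies with $A\preceq A''$.

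$\textbf{Merge step.}$ Let $A'$ be obtained from $A''$ by merging two siblings $k_0, k_1\in A''\setminus A$ into their common parent $k\in A'\subseteq A^\uparrow$. The hierarchical definition of $W$ gives $W_{k_0, j} = W_{k_1, j}$ and $W_{k, j} = 2 W_{k_0, j}$ for all $j\in A'\setminus\{k\}$ (since $\ell(k_0)=\ell(k_1)=\ell(k)-1$ and $k_0\wedge j = k_1\wedge j = k\wedge j$). Compatibility of $h$ with both antichains yields $H''_{k_0}=H''_{k_1}=H'_k/2$; since $k_0, k_1\notin A$, they share a common $A$-ancestor $j(k_0)=j(k_1)=j(k)$, hence $c^{A''}_{k_0}=c^{A''}_{k_1}=c^{A'}_k/2$. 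Consequently, the $v_{k_0}, v_{k_1}$-dependent part of the exponent in $F(A'')$ is the sum of the self-term $2W_{k_0 k_1}(1+\sk{v_{k_0}, v_{k_1}})$ and a term linear in $v_{k_0}+v_{k_1}$, and after matching constants with the $v_k$-dependent exponent in $F(A')$, the identity $F(A'')=F(A')$ reduces, pointwise in the remaining integration variables, to the fundamental identity
\begin{equation}
\int_{(\htwo)^2}\cD v_1\,\cD v_2\, e^{2W(1+\sk{v_1,v_2})+\sk{b, v_1+v_2}} \;=\; \int_{\htwo}\cD v\, e^{\sk{2b, v}}, \label{eq:fund}
\end{equation}
with $W:=W_{k_0 k_1}\ge 0$ and $b := 2\sum_{j\in A'\setminus\{k\}}W_{k_0, j} v_j + (H'_k/2)\,o + c^{A''}_{k_0}$.

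$\textbf{Proof of \eqref{eq:fund}.}$ Both sides equal $\exp(-2\sqrt{-\sk{b,b}})$, each being an instance of the supersymmetric normalization $E^\Lambda_{W, h}[1] = 1$ (formula (5.1) of \cite{disertori-spencer-zirnbauer2010}). Applied to the one-site model with generalized pinning along $c = 2b$ (aligning $c$ with a multiple of $o$ via Lorentz invariance), $E[1]=1$ gives the right-hand side $\int\cD v\, e^{\sk{2b,v}} = \exp(-\sqrt{-\sk{2b,2b}}) = \exp(-2\sqrt{-\sk{b,b}})$. Applied to the two-site model with edge weight $W$ and the same generalized pinning $b$ at both sites, it gives the same value \emph{independent of $W$}: supersymmetry forces the integral over the ``difference mode'' $v_1-v_2$ to be trivial, with exact cancellation between its two bosonic and two Grassmann directions.

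$\textbf{Iteration and main obstacle.}$ Iterating the merge step along a chain of antichains from $\Lambda_N$ down to $A$ within $A^\uparrow$ (such a chain always exists and keeps every intermediate antichain compatible with $h$) establishes $F(\Lambda_N)=F(A)$, which is \eqref{eq:expectation-reduced-model}. The hypothesis on $(a_j)_{j\in A}$ propagates to keep each effective source $b$ in $\hthree_{+0}$ (strictly in $\hthree_+$ on each connected component), guaranteeing convergence of all intermediate superintegrals. The principal technical difficulty is the supersymmetric Laplace-type identity $\int\cD v\, e^{\sk{c,v}} = \exp(-\sqrt{-\sk{c,c}})$ for general $c\in\hthree$ with timelike-forward body, particularly when $c$ carries nontrivial Grassmann parts (from $a_k$ or from the remaining spin variables $v_j$); establishing it and the analogous two-site statement (whose $W$-independence is the core of \eqref{eq:fund}) requires extending the Lorentz/supersymmetry argument behind $E^\Lambda[1]=1$ to admit Grassmann-valued sources, with careful bookkeeping of nilpotent parts and Berezin signs.
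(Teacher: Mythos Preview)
Your approach is essentially the same as the paper's: both argue by induction over maximal antichains compatible with $h$, and both reduce the induction step to a supersymmetric localization identity for an $\htwo$-integral with a source vector $b\in\hthree_+$ that may carry Grassmann components. The only structural difference is granularity: you merge two siblings $k_0,k_1$ into their parent at each step, whereas the paper picks a vertex $i\in A\setminus\Lambda_N$ and collapses the entire block $\B_i$ of leaves to a single spin in one move. Your two-site identity \eqref{eq:fund} is exactly the $|\Lambda|=2$ case of the paper's Lemma~3.2 (Reduction to single spins), and the paper uses the general-$|\Lambda|$ version; the proofs of the two versions are identical in difficulty, since the work lies entirely in handling the source $a$ (your $b$), not in the number of spins being integrated.

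What you correctly flag as ``the principal technical difficulty'' is precisely the content of that lemma, and the paper proves it by a four-case reduction: first localize at $a=\alpha o$ via the $Q$-supersymmetry, then successively remove the $x$-, $y$-, and Grassmann components of $a$ by Lorentz boosts $L_{13},L_{23}$ and the odd operators $L_{34},L_{35}$, using the Ward identities to show the integral is constant along each deformation. Your sketch (``aligning $c$ with a multiple of $o$ via Lorentz invariance'' and ``extending the Lorentz/supersymmetry argument \ldots\ to admit Grassmann-valued sources'') points in exactly this direction but stops short of carrying it out; the paper's Cases~1--4 are what would complete your argument. Your heuristic that ``supersymmetry forces the integral over the difference mode $v_1-v_2$ to be trivial'' is suggestive but not how the paper proceeds---the localization acts on all spins jointly rather than isolating a difference mode.
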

Note that in formula \eqref{eq:expectation-reduced-model} the variables $v_j$ 
have different meanings on the left-hand side and on the right-hand side. 
More precisely, on the left-hand side, they are understood as block spin
variables as in formula \eqref{eq:def-block-spin}, while on the right-hand side
they are the original variables $v_j\in\htwo$ of the $\htwo$-model with 
vertex set $A$.

We remark that taking derivatives with respect to the components of 
the parameters $a_j$ in formula \eqref{eq:expectation-reduced-model}, 
one can treat also polynomials in the $x_j,y_j,z_j,\xi_j,\eta_j$. 
We expect that this can be extended to general observables 
$f((v_j)_{j\in A})$ rather than only $e^{\sum_{j\in A}\sk{a_j,v_j}}$ by 
working out a theory for super Fourier Laplace transforms
of super measures in the spirit of \cite{fresta2021}. 
However, for our application, we need only 
a very special case which is treated in the following corollary in 
an elementary way.

Let $\cL_P((X_i)_{i\in I})$ denote the joint law of the random variables $(X_i)_{i\in I}$ 
with respect to some probability law $P$. Recall the assumptions specified right above
Theorem \ref{thm:reduced-model}. 

\begin{corollary}[Coincidence of distributions]
\label{cor:distribution-effective-model}
Let $B:=A\cap\Lambda_N$ denote the set of leaves of the maximal antichain $A$. 
Then, the joint laws of $u_B:=(u_j)_{j\in B}$ and 
$s_B:=(s_j)_{j\in B}$ with respect to 
$P^{\Lambda_N}_{W,h}$ and $P^A_{W,H}$ coincide. More generally, 
the following joint laws agree
\begin{align}
\label{eq:identity-laplace-trafos}
\cL_{P^{\Lambda_N}_{W,h}}
\Big(\Big(|\B_j|^{-1}\sum_{i\in \B_j}e^{u_i},
|\B_j|^{-1}\sum_{i\in \B_j}s_ie^{u_i}\Big)_{j\in A}\Big)  
=\cL_{P^A_{W,H}}((e^{u_j},s_je^{u_j})_{j\in A}).  
\end{align}
\end{corollary}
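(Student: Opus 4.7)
My plan is to derive equation~\eqref{eq:identity-laplace-trafos} from Theorem~\ref{thm:reduced-model} by specialising the test parameters $(a_j)_{j\in A}$ so that the identity becomes an equality of joint Laplace transforms of real-valued observables; the statement about $(u_B,s_B)$ then falls out by restriction to $B=A\cap\Lambda_N$ and the change of variables $(u,s)\mapsto(e^u,se^u)$.

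Concretely, I would take $a_j:=(t_j,r_j,-t_j,0,0)\in\hthree$ with real parameters $(t_j,r_j)_{j\in A}$, so that by \eqref{eq:def-inner-product} one has $\sk{a_j,v_j}=t_j(x_j+z_j)+r_j y_j$. Two simple observations translate the resulting identity into \eqref{eq:identity-laplace-trafos}. First, for an original spin $v_i\in\htwo$, formulas \eqref{eq:change-of-coordinates1}--\eqref{eq:change-of-coordinates2} yield $x_i+z_i=e^{u_i}$ (the Grassmann parts cancel) and $y_i=s_ie^{u_i}$. Second, the block-spin definition \eqref{eq:def-block-spin} is $\R$-linear, so for a block spin $v_j$ with $j\in A$ we get $x_j+z_j=|\B_j|^{-1}\sum_{i\in\B_j}e^{u_i}=:\tilde X_j$ and $y_j=|\B_j|^{-1}\sum_{i\in\B_j}s_ie^{u_i}=:\tilde Y_j$. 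Substituting into \eqref{eq:expectation-reduced-model} produces
\begin{align*}
E^{\Lambda_N}_{W,h}\Bigl[e^{\sum_{j\in A}(t_j\tilde X_j+r_j\tilde Y_j)}\Bigr]
=E^A_{W,H}\Bigl[e^{\sum_{j\in A}(t_j e^{u_j}+r_j s_j e^{u_j})}\Bigr].
\end{align*}
A direct computation with $H_jo+a_j=(t_j,r_j,H_j-t_j,0,0)$ gives $\sk{H_jo+a_j,H_jo+a_j}=r_j^2+2H_jt_j-H_j^2$, so the admissibility condition $H_jo+a_j\in\hthree_{+0}$ reduces to $r_j^2+2H_jt_j\le H_j^2$ together with $t_j\le H_j$. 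This carves out a nonempty open neighbourhood of the origin in $(t_j,r_j)$-space whenever $H_j>0$, and the strict condition $H_jo+a_j\in\hthree_+$ likewise holds near the origin; by the pinning hypothesis such a $j$ exists in every connected component of $(A,\cE_{A,W})$, so Theorem~\ref{thm:reduced-model} applies on a nonempty open set of real parameters.

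Equality of the joint moment generating functions on a nonempty open set then gives equality of the joint laws of $(\tilde X_j,\tilde Y_j)_{j\in A}$ under $P^{\Lambda_N}_{W,h}$ and of $(e^{u_j},s_je^{u_j})_{j\in A}$ under $P^A_{W,H}$, by the standard uniqueness theorem for multivariate Laplace transforms (analytic continuation into a complex tube and Fourier inversion on its imaginary axis). This is exactly \eqref{eq:identity-laplace-trafos}. For the first assertion I would specialise to $j\in B$, where $|\B_j|=1$ and hence $\tilde X_j=e^{u_j}$ and $\tilde Y_j=s_je^{u_j}$, marginalise to $B$, and compose with the diffeomorphism $(u,s)\mapsto(e^u,se^u)$ of $\R^2$ onto $(0,\infty)\times\R$ to conclude coincidence of the joint laws of $(u_B,s_B)$.

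The main obstacle I foresee is ensuring a genuinely open admissible parameter set when $H_j=0$ for some $j\in A$: there the admissibility inequalities collapse to $r_j=0$ and $t_j\le 0$, merely a one-dimensional ray. I would deal with this by a continuity argument --- perturb the pinning by adding $\delta>0$ on the offending leaves so that all $H_j^{(\delta)}>0$, apply the open-set argument to the perturbed model, and pass to the limit $\delta\downarrow 0$ using continuity of the horospherical density \eqref{eq:P-in-horosph-coor} in the pinning parameters.
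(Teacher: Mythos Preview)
Your proposal is correct and follows essentially the same route as the paper: specialise $a_j$ to $(t_j,r_j,-t_j,0,0)$ so that Theorem~\ref{thm:reduced-model} becomes an equality of joint Laplace transforms of $(x_j+z_j,y_j)=(e^{u_j},s_je^{u_j})$, verify the admissibility condition on an open parameter set when all $H_j>0$, and then remove the positivity assumption by a limiting argument in the pinning (the paper does this via dominated convergence on the explicit density~\eqref{eq:P-in-horosph-coor}). Just make sure your perturbation $h^{(\delta)}$ is chosen constant on each $\B_j$ so that compatibility with $A$ is preserved.
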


\section{Derivation of the effective model}
\subsection{Reduction of certain integrals}

The (super-)symmetries of the $\htwo$-model allow to calculate certain 
integrals. Lemma \ref{le:integrate-variables-out} below provides such a 
result. It is crucially used in the proof of Theorem \ref{thm:reduced-model}. 

\begin{definition}[Fast decaying superfunctions]\mbox{}\\
For any finite set $\Lambda$, a function 
$f:(\hthree)^\Lambda\to\cA_0$, possibly depending on parameters, is called
fast decaying on $(\htwo)^\Lambda$ if it is a smooth superfunction and 
all coefficients of 
$(\R^2)^\Lambda\ni(x_j,y_j)_{j\in\Lambda}\mapsto f((x_j,y_j,\sqrt{1+x_j^2+y_j^2+2\xi_j\eta_j},\xi_j,\eta_j)_{j\in\Lambda})$ in the sense of formula 
\eqref{eq:representation-omega} in the appendix are Schwartz functions.
\end{definition}

For $a\in\hthree_{+}$, we set $\|a\|:=\sqrt{-\sk{a,a}}$. Note that for $a\in\hthree_{+0}$ with
$\body \sk{a,a}=0$ and non zero Grassmann components the norm cannot be defined.
Recall the abbreviation $\cD v_\Lambda$ introduced in formula 
\eqref{eq:Dv-Lambda}. 

\begin{lemma}[Reduction to single spins]
\label{le:integrate-variables-out}
For any finite set $\Lambda$, $c_j\in[0,\infty)$, $j\in\Lambda$, 
$a\in\hthree_+$, and any function $f:(\hthree_+)^\Lambda\to\cA_0$ such that 
$v_\Lambda\mapsto f\left((\sk{v_i,v_j})_{i,j\in\Lambda}\right)e^{\sum_{j\in\Lambda}c_j\sk{v_j,a}}$
is fast decaying on $(\htwo)^\Lambda$, 
one has 
\begin{align} 
&\int_{(H^{2|2})^\Lambda}\cD v_\Lambda \, 
f\left((\sk{v_i,v_j})_{i,j\in\Lambda}\right)e^{\sum_{j\in\Lambda}c_j\sk{v_j,a}}\nonumber\\
=&f\left((-1)_{i,j\in\Lambda}\right)e^{-\sum_{j\in\Lambda}c_j\|a\|}
=\int_{H^{2|2}}\cD v \, 
f\left((\sk{v,v})_{i,j\in\Lambda}\right)e^{\sum_{j\in\Lambda}c_j\sk{v,a}}.
\label{eq:localization-applied}
\end{align}
\end{lemma}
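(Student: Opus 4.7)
The plan is first to reduce the statement to the special case $a=\|a\|o$ by a change of variables using the supergroup of inner-product-preserving transformations of $\hthree$, and then to establish the identity by a supersymmetric localization argument (or, equivalently, by iterating the single-spin super-Laplace transform together with an $OSp$-based decoupling).

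For the reduction, I would use that the supergroup $OSp(2|2)$ acts on $\hthree$ preserving $\sk{\cdot,\cdot}$ and the supermanifold $\htwo$, and that its orbits on $\hthree_+$ are parametrized by the value of $\|\cdot\|$. The induced diagonal action $(v_j)_{j\in\Lambda}\mapsto(gv_j)_{j\in\Lambda}$ leaves the superintegration form $\cD v_\Lambda$ and the factor $f((\sk{v_i,v_j}))$ invariant while sending $\sk{v_j,a}$ to $\sk{v_j,g^{-1}a}$. Picking $g$ with $g\,o=a/\|a\|$, I may assume $a=\|a\|o$; then $\sk{v_j,a}=-\|a\|z_j$. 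Under this reduction the second equality of \eqref{eq:localization-applied} is immediate: on a single spin $v\in\htwo$ the constraint forces $\sk{v,v}=-1$, so $f((\sk{v,v})_{i,j})=f((-1)_{i,j})$ is a constant, and the single-spin super-Laplace identity $\int_{\htwo}\cD v\,e^{\sk{v,b}}=e^{-\|b\|}$ (valid for $b\in\hthree_+$, applied with $b=\sum_j c_j\, a$) yields the claimed right-hand side.

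The main step is the first equality of \eqref{eq:localization-applied}. After this reduction, both $\cD v_\Lambda$ and the integrand depend on $v_\Lambda$ only through supergroup-invariant quantities, namely the Gram matrix $(\sk{v_i,v_j})_{i,j}$ and the linear functional $\sum_j c_j z_j$. A supersymmetric Stokes/localization argument, in the spirit of the proofs of identities such as $E^\Lambda_{W,h}[1]=1$ from \cite{disertori-spencer-zirnbauer2010}, should then force the integral to localize on the unique classical configuration $v_j=a/\|a\|$ for all $j$, where the integrand takes the value $f((-1))e^{-\sum_j c_j\|a\|}$. As an equivalent bottom-up implementation I would try an induction on $|\Lambda|$: pick a distinguished vertex $k$, bring the inner integral over $v_k$ into standard form by a measurable $OSp$-rotation depending on the remaining spins, apply the single-spin super-Laplace transform to produce the factor $e^{-c_k\|a\|}$, and verify that every $f$-argument $\sk{v_k,v_j}$ simultaneously collapses to $-1$ so that the induction hypothesis applies.

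The hard part will be executing this last step rigorously. From the localization viewpoint this requires identifying an explicit nilpotent supersymmetry $Q$ annihilating the integrand together with a suitable $Q$-exact deformation concentrating the integral on the critical set, and then invoking the fast-decay hypothesis to discard boundary terms at infinity on the non-compact supermanifold $(\htwo)^\Lambda$. From the inductive viewpoint the challenge is to show that integrating out $v_k$ really does collapse all arguments $\sk{v_k,v_j}$ to $-1$ simultaneously with producing the factor $e^{-c_k\|a\|}$, since the bare single-spin Laplace identity does not visibly act on the nonlinear $f$-dependence. The strict positivity $a\in\hthree_+$ will be essential throughout: it ensures a non-degenerate critical point of the linear functional $\sum_j c_j\sk{v_j,a}$ and underlies the convergence in the super-Laplace identity.
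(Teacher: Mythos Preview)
Your outline is essentially correct and matches the paper's architecture: reduce to $a$ proportional to $o$, then localize. The execution differs mainly in how the reduction is carried out. Rather than invoking a single global supergroup element $g$ sending $o$ to $a/\|a\|$, the paper connects $a$ to $\|a\|o$ by an explicit path $a(t)$ in several stages, successively removing the odd components and then the $y$- and $x$-components. At each stage one has $\frac{d}{dt}F(a(t),v_\Lambda)=-\varphi\,L^{a(t)}F$ for a specific generator $L\in\{L_{13},L_{23},L_{34},L_{35}\}$; since $\big(L^{a(t)}+\sum_j L^{v_j}\big)F=0$ by invariance of the inner product, the Ward identity $\int\cD v_\Lambda\,\sum_j L^{v_j}F=0$ makes the integral constant in $t$. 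This infinitesimal route sidesteps the subtlety your version faces when $a$ has Grassmann components: there a change of integration variables by a supermatrix with odd entries must be justified, whereas the Ward-identity argument uses only first-order differential operators and the fast-decay hypothesis to discard boundary terms.

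Conversely, you overestimate the difficulty of the localization step. Once $a=\alpha o$, the odd operator $Q=\sum_{j\in\Lambda}Q^{v_j}$ with $Q^v=L_{15}-L_{24}$ annihilates the integrand because $Q^v\sk{v,o}=0$ and $(Q^{v_i}+Q^{v_j})\sk{v_i,v_j}=0$; the paper then simply cites the localization formula from \cite{disertori-spencer-zirnbauer2010} (Appendix~C, Proposition~2) to evaluate the integral at $v_j=o$, with no $Q$-exact deformation to construct by hand. Your alternative inductive scheme has exactly the gap you flagged: integrating out a single $v_k$ does not collapse the off-diagonal arguments $\sk{v_k,v_j}$ unless one already has the multi-spin localization, so that route does not close on its own.
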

This lemma has the following simple but interesting consequence. 

\begin{corollary}[Super Laplace transform of a block spin]
\label{cor:expectation-mean}
Let $\epsilon>0$ and consider the $\htwo$-model on an arbitrary finite set 
$\Lambda$ with arbitrary weights $W$ and uniform
pinning given by $h_j=|\Lambda|^{-1}\epsilon$ for all $j\in\Lambda$. For all 
$a\in\hthree$ with $a+\epsilon o\in\hthree_+$, the super Laplace 
transform of $|\Lambda|^{-1}\sum_{j\in\Lambda}v_j$ is given by 
\begin{align}
E^\Lambda_{W,h}\left[e^{\sk{a,|\Lambda|^{-1}\sum_{j\in\Lambda}v_j}}\right]
=e^{\epsilon -\|a+\epsilon o\|}=e^{\epsilon-\sqrt{\epsilon^2-2\epsilon\sk{a,o}-\sk{a,a}}}.
\end{align}
In particular, the mean 
$|\Lambda|^{-1}\sum_{j\in\Lambda}(x_j+z_j)=|\Lambda|^{-1}\sum_{j\in\Lambda}e^{u_j}$
has the probability density 
\begin{align}
f_{1,\epsilon}(t)=\sqrt{\frac{\epsilon}{2\pi t^3}}\exp\left(-\frac{\epsilon(t-1)^2}{2t}\right)
, \quad t>0, 
\end{align}
i.e.\ it is an inverse Gaussian distribution with parameters $1$ and $\epsilon$
with respect to $P^\Lambda_{W,h}$.  
\end{corollary}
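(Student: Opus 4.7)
The plan is to reduce the statement to a single application of Lemma \ref{le:integrate-variables-out} by absorbing the uniform pinning and the observable into one common linear functional of the spins. With $h_j=\epsilon/|\Lambda|$, the action \eqref{eq:action-S} reads
\begin{align*}
S^\Lambda_{W,h}(v_\Lambda)=\sum_{i,j\in\Lambda}W_{ij}(1+\sk{v_i,v_j})+\epsilon+\epsilon\sk{|\Lambda|^{-1}\textstyle\sum_{j\in\Lambda}v_j,o},
\end{align*}
and adding the observable exponent $\sk{a,|\Lambda|^{-1}\sum_{j\in\Lambda}v_j}$ combines, by bilinearity of $\sk{\cdot,\cdot}$, into $\sum_{j\in\Lambda}|\Lambda|^{-1}\sk{v_j,a+\epsilon o}$. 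The constant prefactor is $e^{\sum_{i,j}W_{ij}+\epsilon}$ and the remaining integrand has the form required by Lemma \ref{le:integrate-variables-out} with $c_j=1/|\Lambda|$, the timelike vector $a+\epsilon o\in\hthree_+$, and $f((t_{ij}))=\exp(\sum_{i,j}W_{ij}t_{ij})$.

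Applying the lemma yields $e^{-\sum_{i,j}W_{ij}}\cdot e^{-|\Lambda|\cdot|\Lambda|^{-1}\|a+\epsilon o\|}$. The $W$-sums cancel against the prefactor, leaving $e^{\epsilon-\|a+\epsilon o\|}$. To expand the norm I use $\sk{o,o}=-1$, giving $\sk{a+\epsilon o,a+\epsilon o}=\sk{a,a}+2\epsilon\sk{a,o}-\epsilon^2$, whence $\|a+\epsilon o\|=\sqrt{\epsilon^2-2\epsilon\sk{a,o}-\sk{a,a}}$; this is the claimed super Laplace transform.

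For the inverse Gaussian identification I specialize to $a=(t,0,-t,0,0)\in\hthree$ with a real parameter $t<\epsilon/2$, so that $a+\epsilon o=(t,0,\epsilon-t,0,0)\in\hthree_+$ (admissibility is straightforward: $\sk{a+\epsilon o,a+\epsilon o}=2\epsilon t-\epsilon^2<0$ and body-$z>0$). For such $a$, $\sk{a,v_j}=t(x_j+z_j)=te^{u_j}$, using the horospherical relation $x_j+z_j=e^{u_j}$. The main formula then gives the moment generating function
\begin{align*}
E^\Lambda_{W,h}\bigl[e^{t|\Lambda|^{-1}\sum_je^{u_j}}\bigr]=\exp\bigl(\epsilon-\sqrt{\epsilon^2-2\epsilon t}\bigr),\qquad t<\tfrac{\epsilon}{2},
\end{align*}
which one recognizes as the MGF of the inverse Gaussian law with parameters $\mu=1$, $\lambda=\epsilon$ (namely $\exp(\lambda\mu^{-1}(1-\sqrt{1-2\mu^2 t/\lambda}\,))$). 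Uniqueness of Laplace transforms on a neighborhood of $0$ identifies the distribution and hence the density $f_{1,\epsilon}$.

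The only step requiring care is the verification of the fast-decay hypothesis of Lemma \ref{le:integrate-variables-out}. Since $v_i,v_j\in H^{2|2}$ forces $\body\sk{v_i,v_j}\le-1$, the factor $e^{\sum_{i,j}W_{ij}\sk{v_i,v_j}}$ is bounded on bodies, while the timelike condition $a+\epsilon o\in\hthree_+$ ensures that $e^{\sum_jc_j\sk{v_j,a+\epsilon o\,}}$ decays exponentially in each pair $(x_j,y_j)$ once $z_j=\sqrt{1+x_j^2+y_j^2+2\xi_j\eta_j}$ is substituted; together with the smoothness in the Grassmann components this gives Schwartz coefficients, as demanded by the hypothesis. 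I expect this bookkeeping to be the only mildly technical point; everything else is pure algebra.
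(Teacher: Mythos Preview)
Your proof is correct and follows essentially the same route as the paper: both apply Lemma~\ref{le:integrate-variables-out} with $c_j=|\Lambda|^{-1}$, the timelike vector $a+\epsilon o$, and $f$ built from $e^{\sum W_{ij}\sk{v_i,v_j}}$, then specialize $a$ to a null vector of the form $(t,0,-t,0,0)$ to recognize the inverse Gaussian Laplace transform. The only cosmetic differences are that the paper absorbs the constant $\sum_{i,j}W_{ij}$ and $\epsilon$ into $f$ rather than tracking them as a prefactor, and it takes the parameter $b<0$ (Laplace transform) whereas you allow $t<\epsilon/2$ (moment generating function); your added paragraph on the fast-decay hypothesis is a welcome explicit check that the paper leaves implicit.
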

We now give the proofs of the two results.

\medskip\noindent
\begin{proof}[Proof of Lemma \ref{le:integrate-variables-out}]
It suffices to prove the first equality in \eqref{eq:localization-applied},
because the second one is just the 
special case of $\Lambda$ being a singleton.
Let 
$F:\hthree_+\times(\hthree_+)^\Lambda\to\cA_0$ be defined by
\begin{align}
F(a,v_\Lambda):=&
f\left((\sk{v_i,v_j})_{i,j\in\Lambda}\right)e^{\sum_{j\in\Lambda}c_j\sk{v_j,a}}.
\end{align}
Recall $o$ from equation \eqref{eq:def-o}.

The proof  uses the supersymmetry
operators $L_{ij}$ (cf. Def. \ref{def:supersym-operators} in the appendix), and is organized in four steps, according to the form
of the vector $a.$ Precisely $a=(0,0,z_a,0,0)$ in case 1,  $a=(x_{a},0,z_a,0,0)$ in case 2,
 $a=(x_{a},y_{a},z_a,0,0)$ in case 3 and finally $a$ has the most general form
$a=(x_a,y_a,z_a,\xi_a,\eta_a)$ in case 4. \medskip

%%%%%%%%%%%%%%%%%%%%%%%%%%%%%%%% 
\noindent \emph{Case 1.} 
%We prove \eqref{eq:localization-applied} first in the special case
Let $a=\alpha o,$ where the  parameter $\alpha$ is an even element of the Grassmann algebra
with $\alpha>0,$ and must not depend on  the integration variables $v_j$. In this case, by formula
\eqref{eq:Q-annihilates-inner-product} (from Fact \ref{le:L-annihilates-inner-prod}
in the appendix), the integrand $F$ is annihilated by the 
odd supersymmetry operator $Q:=\sum_{j\in\Lambda}Q^{v_j}$, where $Q^{v_j}$ is 
defined in \eqref{eq:def-Q}. 
The hypothesis that $F(a,\cdot)$ is fast decaying allows us to apply
the localization result from  \cite{disertori-spencer-zirnbauer2010}[Appendix C] (in particular 
Proposition 2) as follows 
\begin{align}
&\int_{(H^{2|2})^\Lambda}\cD v_\Lambda \, 
f\left((\sk{v_i,v_j})_{i,j\in\Lambda}\right)e^{\sum_{j\in\Lambda}c_j\sk{v_j,a}}\nonumber\\
&\qquad\qquad  =f\left((\sk{o,o})_{i,j\in\Lambda}\right)e^{\sum_{j\in\Lambda}c_j\sk{o,a}}
=f\left((-1)_{i,j\in\Lambda}\right)e^{-\sum_{j\in\Lambda}c_j\|a\|}. 
\end{align} 
%%%%%%%%%%%%%%%%%%%%%%%%%%%%%%%%%%%%%%
\emph{Case 2.} 
%Next, we consider the more general case
Let $a=(x_a,0,z_a,0,0)$ with $x_a,z_a$ even elements, $z_a>0$,
$\sk{a,a}<0$. We define $\varphi:=\artanh\frac{x_a}{z_a}$, and set, for $0\le t\le 1$, $t\in\R$, 
\begin{align}
a(t)=(x(t),0,z(t),0,0)
:=\sqrt{z_a^2-x_a^2}\  \big (\sinh(t\varphi),0,\cosh(t\varphi),0,0 \big),
\end{align}
where $\sqrt{z_a^2-x_a^2}=\sqrt{-\sk{a,a}} $ is well defined because
$\sk{a,a}<0$. Note that $\sk{a(t),a(t)}=\sk{a,a}$ for all $t$. 
Moreover, 
\begin{align}
\label{eq:a0-a1}
a(0)=\sqrt{z_a^2-x_a^2}(0,0,1,0,0)=\sqrt{z_a^2-x_a^2}\ o\quad\text{ and }\quad a(1)=a. 
\end{align}
By direct computation one has 
\begin{align}
\frac{d}{dt}F(a(t),v_\Lambda)=-\varphi L_{13}^{a(t)}F(a(t),v_\Lambda),
\end{align}
where $L_{13}^{a(t)}$ acts on the first component of $F$, which is then 
substituted by $a(t)$. By formula \eqref{eq:L-annihilates-fn-of-inner-product}
in Fact \ref{le:L-annihilates-inner-prod}, one has
\begin{align}
\Big(L_{13}^{a(t)}+\sum_{j\in\Lambda}L_{13}^{v_j}\Big)F(a(t),v_\Lambda)=0
\end{align}
because the variables appear only inside the inner product. 
We conclude 
\begin{align}
& \frac{d}{dt}\int_{(H^{2|2})^\Lambda}\cD v_\Lambda \, 
F(a(t),v_\Lambda)
=\int_{(H^{2|2})^\Lambda}\cD v_\Lambda \, \frac{d}{dt}F(a(t),v_\Lambda)\nonumber\\
&= -\int_{(H^{2|2})^\Lambda}\cD v_\Lambda \, \varphi L_{13}^{a(t)}F(a(t),v_\Lambda)
=\varphi\int_{(H^{2|2})^\Lambda}\cD v_\Lambda \, 
\sum_{j\in\Lambda}L_{13}^{v_j}F(a(t),v_\Lambda)
=0,
\label{eq:abl-int-0}
\end{align}
where in the last step we have used the Ward identity 
\eqref{eq:ward-identity}. Consequently, the function 
\begin{align}
[0,1]\ni t\mapsto \int_{(H^{2|2})^\Lambda}\cD v_\Lambda \, F(a(t),v_\Lambda)  
\end{align}
is constant and in particular, its values at $t=0$ and $t=1$ agree. 
Recall the expressions \eqref{eq:a0-a1} for 
$a(0)$ and $a(1)$. Using that Case 1 is applicable to $a(0)$, we conclude 
\begin{align}
& \int_{(H^{2|2})^\Lambda}\cD v_\Lambda \, F(a,v_\Lambda) 
=\int_{(H^{2|2})^\Lambda}\cD v_\Lambda \, F(a(1),v_\Lambda) \label{eq:int-exp-sk}
\\
&\quad =\int_{(H^{2|2})^\Lambda}\cD v_\Lambda \, F(a(0),v_\Lambda)
=F\left((-1)_{i,j\in\Lambda}\right)e^{-\sum_{j\in\Lambda}c_j\|a(0)\|}%&\nonumber\\ &
=F\left((-1)_{i,j\in\Lambda}\right)e^{-\sum_{j\in\Lambda}c_j\|a\|}.
\nonumber
\end{align}
\emph{Case 3:} 
%Next, we consider the more general case 
Let $a=(x_a,y_a,z_a,0,0)$ with $x_a,y_a,z_a$ even elements, $z_a>0$,
$\sk{a,a}<0$. Set $\varphi=\artanh\frac{y_a}{z_a}$ and,  for $0\le t\le 1$, $t\in\R$, 
\begin{align}
&a(t)=(x_a,y(t),z(t),0,0)\\
& \text{with}\quad
y(t)=\sqrt{z_a^2-y_a^2}\sinh(t\varphi),\quad 
z(t)=\sqrt{z_a^2-y_a^2}\cosh(t\varphi),
\end{align}
where $\sqrt{z_a^2-y_a^2}$ is well defined because
$z_a^2-y_a^2\ge-\sk{a,a}>0$. Note that $\sk{a(t),a(t)}=\sk{a,a}$ for all $t$. 
Moreover, 
\begin{align}
\label{eq:a0-a1-2}
a(0)=(x_a,0,\sqrt{z_a^2-y_a^2},0,0)\quad\text{ and }\quad a(1)=a. 
\end{align}
As in Case 2, by direct computation one has 
\begin{align}
\frac{d}{dt}F(a(t),v_\Lambda)=-\varphi L_{23}^{a(t)}F(a(t),v_\Lambda)
\end{align}
and again formula \eqref{eq:L-annihilates-fn-of-inner-product} in
Fact \ref{le:L-annihilates-inner-prod} implies
\begin{align}
\Big(L_{23}^{a(t)}+\sum_{j\in\Lambda}L_{23}^{v_j}\Big)F(a(t),v_\Lambda)=0.
\end{align}
Using the same argument as in \eqref{eq:abl-int-0}, with $L_{13}$ replaced
by $L_{23}$, we conclude 
\begin{align}
\frac{d}{dt}\int_{(H^{2|2})^\Lambda}\cD v_\Lambda \, F(a(t),v_\Lambda)=0.
\end{align}
Using that Case 2 is applicable to $a(0)$, the claim follows. \\
\emph{Case 4:} Finally, we consider the most general case 
$a=(x_a,y_a,z_a,\xi_a,\eta_a)$. As in the previous cases, we define for $0\le t\le 1$, 
\begin{align}
&a(t)=(x_a,y_a,z(t),\xi(t),\eta(t))\quad\text{with}\\
&z(t)=z_a-\frac{1-t^2}{z_a}\xi_a\eta_a=\sqrt{z_a^2-(1-t^2)2\xi_a\eta_a}, \quad
\xi(t)=t\xi_a,\quad
\eta(t)=t\eta_a.
\end{align}
Note that $\sk{a(t),a(t)}=\sk{a,a}$ for all $t$. Moreover, 
\begin{align}
\label{eq:a0-a1-3}
a(0)=(x_a,y_a,z_a-\tfrac{1}{z_a}\xi_a\eta_a,0,0)
=(x_a,y_a,\sqrt{z_a^2-2\xi_a\eta_a},0,0)
\quad\text{ and }\quad a(1)=a. 
\end{align}
Below we denote by $\partial_{z(t)}F(a(t),v_\Lambda)$ the derivative 
with respect to the $z$-component of $a(t)$, which is then substituted 
by $z(t)$. The same notational convention holds for $\partial_{\xi(t)}$
and $\partial_{\eta(t)}$. 
Analogue to Cases 2 and 3, using $\xi_az(t)=\xi_az_a$ and
$\eta_az(t)=\eta_az_a$, we get
\begin{align}
&\frac{d}{dt}F(a(t),v_\Lambda)
=\left[z'(t)\partial_{z(t)}
+\xi'(t)\partial_{\xi(t)}
+\eta'(t)\partial_{\eta(t)}\right]F(a(t),v_\Lambda)\nonumber\\
=&\left[\frac{2t}{z_a}\xi_a\eta_a\partial_{z(t)}
+\xi_a\partial_{\xi(t)}
+\eta_a\partial_{\eta(t)}
\right]F(a(t),v_\Lambda)\nonumber\\
=&\frac{1}{z_a}\big[\xi_a\big(z(t)\partial_{\xi(t)}+\eta(t)\partial_{z(t)}\big)
+\eta_a\big(z(t)\partial_{\eta(t)}-\xi(t)\partial_{z(t)}\big)\big]
F(a(t),v_\Lambda)\nonumber\\
=&-\frac{1}{z_a}[\xi_aL_{34}^{a(t)}+\eta_aL_{35}^{a(t)}]F(a(t),v_\Lambda).
\end{align}
Note that $\tfrac{\xi_a}{z_a}$ and $\tfrac{\eta_a}{z_a}$ are the 
analogue of $\varphi$ in cases 2 and 3. 
Formula \eqref{eq:L-annihilates-fn-of-inner-product} implies
\begin{align}
\Big(L_{ik}^{a(t)}+\sum_{j\in\Lambda}L_{ik}^{v_j}\Big)F(a(t),v_\Lambda)=0
\end{align}
for $ik=34,35$. Using again the Ward identity from Fact 
\ref{le:L-annihilates-inner-prod} and the fact that Case 3 is applicable to 
$a(0)$ we conclude. 
\end{proof}

As a first application of Lemma \ref{le:integrate-variables-out}, we determine
the super Laplace transform of $|\Lambda|^{-1}\sum_{j\in\Lambda}v_j$.

\medskip\noindent
\begin{proof}[Proof of Corollary \ref{cor:expectation-mean}]
Using first the definition \eqref{eq:def-superintegration-form} of the 
superintegration form and then Lemma \ref{le:integrate-variables-out} with 
the function $f\left((\sk{v_i,v_j})_{i,j\in\Lambda}\right)=e^{\sum_{i,j\in\Lambda}W_{ij}(1+\sk{v_i,v_j})+\epsilon}$, 
$c_j=|\Lambda|^{-1}$ for all $j$, and $a$ replaced by $a+\epsilon o\in\hthree_+$, 
we obtain 
\begin{align}
E^\Lambda_{W,h}\left[e^{\sk{a,|\Lambda|^{-1}\sum_{j\in\Lambda}v_j}}\right]
=&\int_{(H^{2|2})^\Lambda}\cD v_\Lambda
e^{\sum_{i,j\in\Lambda}W_{ij}(1+\sk{v_i,v_j})+|\Lambda|^{-1}\epsilon\sum_{i\in\Lambda}(1+\sk{v_i,o})}
e^{\sk{a,|\Lambda|^{-1}\sum_{j\in\Lambda}v_j}}\nonumber\\
=&e^{\epsilon-\| a+\epsilon o\|}=\exp(\epsilon-\sqrt{\epsilon^2-2\epsilon\sk{a,o}-\sk{a,a}}). 
\end{align}
We consider $a=(b,0,-b,0,0)$ with $b<0$. Then, we find 
$\sk{a,a}=0$, $\sk{a,o}=b$, $a+\epsilon o\in\hthree_+$ 
and   
$be^{u_j}=b(x_j+z_j)=\sk{a,v_j}$. Consequently, the Laplace transform 
of $|\Lambda|^{-1}\sum_{j\in\Lambda}e^{u_j}=|\Lambda|^{-1}\sum_{j\in\Lambda}(x_j+z_j)$ 
is given by 
\begin{align}
E^\Lambda_{W,h}\left[e^{b|\Lambda|^{-1}\sum_{j\in\Lambda}e^{u_j}}\right]
=E^\Lambda_{W,h}\left[e^{\sk{a,|\Lambda|^{-1}\sum_{j\in\Lambda}v_j}}\right]
=e^{\epsilon-\sqrt{\epsilon^2-2\epsilon b}}. 
\end{align}
This is the Laplace transform of an inverse Gaussian distribution with 
parameters $1$ and $\epsilon$ and the claim follows. 
\end{proof}

\subsection{Reduction to the effective model}

Using the reduction of integrals given in Lemma \ref{le:integrate-variables-out} and a 
representation of the action of the hierarchical $\htwo$-model, 
we prove Theorem \ref{thm:reduced-model}. 

\medskip\noindent
\begin{proof}[Proof of Theorem \ref{thm:reduced-model}]
The conditions on $H_jo+a_j$ guarantee that both integrals in 
\eqref{eq:expectation-reduced-model} are well-defined and finite. 
Let $\cA_h$ denote the set of maximal antichains which are compatible with $h$ and
recall the partial order  $\prec $ for antichains defined in Section \ref{section2.2}.
Since there do not exist infinite 
increasing sequences $A_1\prec A_2\prec A_3\prec\cdots$ in $\cA_h$, we can 
prove the claim by induction over $\cA_h$ with respect to the reversed partial order 
$\succ$ of $\prec$. 

Let $A\in\cA_h$. As induction hypothesis, assume that the claim holds 
for all maximal antichains $A'\in\cA_h$ with $A\prec A'$. 
If $A=\Lambda_N$, there is nothing 
to prove. Otherwise, we find $i\in A\setminus\Lambda_N$ and set 
$A'=(A\setminus\{i\})\cup \B_i$. Note that $A'$ is a maximal antichain compatible with $h$ and
fulfilling  $A\prec A'.$  
Set $a_j'=a_j$ for $j\in A\setminus\{i\}$ and $a_j'=|\B_i|^{-1}a_i$ for 
$j\in \B_i$. Then, $\sum_{j\in \B_i}\sk{a_j',v_j}=\sk{a_i,v_i}$ by the definition 
\eqref{eq:def-block-spin} of the block spin variable $v_i$, and hence 
$\sum_{j\in A}\sk{a_j,v_j}=\sum_{j\in A'}\sk{a_j',v_j}$.
Moreover, for $j\in B_{i},$ the equality $H_{j}=h_{j}$ and $a_{i}+H_{i}o\in \hthree_{+0},\hthree_{+} $
imply
$a'_{j}+h_{j}o\in \hthree_{+0},\hthree_{+},$ respectively. 
Consequently, 
the induction hypothesis is applicable to $A'$ and the $a_j'$. This yields 
\begin{align}
  \label{eq:begin-induction}
E^{\Lambda_N}_{W,h}\left[e^{\sum_{j\in A}\sk{a_j,v_j}}\right]
=E^{\Lambda_N}_{W,h}\left[e^{\sum_{j\in A'}\sk{a_j',v_j}}\right]
=E^{A'}_{W,H}\left[e^{\sum_{j\in A'}\sk{a_j',v_j}}\right].
\end{align}
By the definition \eqref{eq:action-S} of the action 
$S_{W,H}^{A'}(v_{A'})$ with 
vertex set $A'$, weights $W_{ij}$ and pinning $H_j$ given in 
\eqref{eq:extension-def-W} and \eqref{eq:def-H}, respectively, we obtain 
\begin{align}
\label{eq:S-A-prime}
&S_{W,H}^{A'}(v_{A'})
=\sum_{i,j\in A'}W_{ij}(1+\sk{v_i,v_j})+\sum_{j\in A'}H_j(1+\sk{v_j,o})\\
=&S_{W,H}^{A\setminus\{i\}}(v_{A\setminus\{i\}})
+\sum_{j,k\in \B_i}W_{jk}(1+\sk{v_j,v_k})
+2\sum_{\substack{j\in A\setminus\{i\}\\ k\in \B_i}}W_{jk}(1+\sk{v_j,v_k})
+\sum_{k\in \B_i}H_k(1+\sk{v_k,o}).\nonumber
\end{align}
Note that for $j\in A\setminus\{i\}$ and $k\in \B_i$, one has 
$j\wedge k=j\wedge i$ because $j\in A$ and $A$ is an antichain. It follows that 
$W_{jk}=2^{\ell(j)+\ell(k)}w(\ell(j\wedge k))=2^{\ell(j)}w(\ell(j\wedge i))=2^{-\ell(i)}W_{ij},$ which is
independent of $k.$
Furthermore, since $A$ is compatible with $h$, one has the same value $H_k=2^{-\ell(i)}H_i$ 
for all $k\in\B_i.$ Hence, the last two summands in 
\eqref{eq:S-A-prime} can be written as follows 
\begin{equation}
2\sum_{\substack{j\in A\setminus\{i\}\\ k\in \B_i}}W_{jk}(1+\sk{v_j,v_k})
+\sum_{k\in \B_i}H_k(1+\sk{v_k,o})
= C_{i}+\sum_{k\in   \B_i}2^{-\ell(i)} \sk{v_k, V_{i}},
\end{equation}
where
\begin{align}
C_{i}= H_{i}+ \sum_{j\in A\setminus\{i\}} 2W_{ij},\qquad
V_{i}=H_{i}o+ \sum_{j\in A\setminus\{i\}} 2W_{ij}v_{j},
\end{align}
and we used, for each $k\in \B_i,$
\begin{align}
\sum_{j\in A\setminus\{i\}}2W_{jk}v_{j}+H_ko= 2^{-\ell(i)}\Big (H_{i}o+ \sum_{j\in A\setminus\{i\}} 2W_{ij}v_{j} 
\Big )=2^{-\ell(i)}V_i.
\end{align}
Substituting this  in \eqref{eq:S-A-prime} yields
\begin{align}
\label{eq:S-A-prime2}
S_{W,H}^{A'}(v_{A'})=&
S_{W,H}^{A\setminus\{i\}}(v_{A\setminus\{i\}})+C_{i}
+\sum_{j,k\in \B_i}W_{jk}(1+\sk{v_j,v_k})
+\sum_{k\in \B_i}2^{-\ell(i)}\sk{v_k,V_{i}}.
\end{align}
Using first the definition of the superintegration form \eqref{eq:def-superintegration-form} and then
this representation and the fact $a_j'=2^{-\ell(i)}a_i$ for $j\in \B_i$, we obtain
\begin{align}
&E^{A'}_{W,H}\left[e^{\sum_{j\in A'}\sk{a_j',v_j}}\right]
=\int\limits_{(H^{2|2})^{A'}}\cD v_{A'} \, 
e^{S^{A'}_{W,H}(v_{A'})}e^{\sum_{j\in A'}\sk{a_j',v_j}}\nonumber\\
&\qquad =\int\limits_{(H^{2|2})^{A\setminus\{i\}}}\cD v_{A\setminus\{i\}}\,
e^{S_{W,H}^{A\setminus\{i\}}(v_{A\setminus\{i\}})} e^{\sum_{j\in A\setminus\{i\}}\sk{a_j',v_j}}
e^{C_{i}}
\nonumber\\
&\qquad \qquad \cdot\int\limits_{(H^{2|2})^{\B_i}}\cD v_{\B_i} 
\, e^{\sum_{j,k\in \B_i}W_{jk}(1+\sk{v_j,v_k})}
e^{\sum_{k\in \B_i}2^{-\ell(i)}\langle v_k,V_{i}+a_i\rangle}.
\end{align}
We apply Lemma \ref{le:integrate-variables-out} to the inner integral 
with $\Lambda=\B_i$, the function 
$f((\sk{v_j,v_k})_{j,k\in \B_i})=e^{\sum_{j,k\in \B_i}W_{jk}(1+\sk{v_j,v_k})}$, 
$c_j=2^{-\ell(i)}$, and $a=V_{i}+a_i$.
To see that $a\in\hthree_+$ we distinguish two cases. In the case
$W_{ij}=0$ for all $j\in A\setminus\{i\}$, all these points are in different connected components;
hence the assumption on the pinning  guarantees 
$a=H_io+a_i\in\hthree_+$. Otherwise, we have $\sum_{j\in A\setminus\{i\}}W_{ij}v_j\in\hthree_+$,
which implies $a\in\hthree_+$ because of $H_io+a_i\in\hthree_{+0}$.
Lemma \ref{le:integrate-variables-out} allows us to replace the
multidimensional integration with respect to $\cD v_{\B_i}$ by a single
integral with respect to $\cD v_i$ over $\htwo$. 
We obtain
\begin{align}
&E^{A'}_{W,H}\left[e^{\sum_{j\in A'}\sk{a_j',v_j}}\right]
\\
&=\int\limits_{(H^{2|2})^{A\setminus\{i\}}}\cD v_{A\setminus\{i\}}\,
e^{S_{W,H}^{A\setminus\{i\}}(v_{A\setminus\{i\}})} 
e^{\sum_{j\in A\setminus\{i\}}\sk{a_j,v_j}} e^{C_{i}}
\int\limits_{H^{2|2}}\cD v_i 
e^{\sum_{j,k\in \B_i}W_{jk}(1+\sk{v_i,v_i})}
e^{\langle v_i,V_{i}+a_i\rangle}
\nonumber\\
&=\int\limits_{(H^{2|2})^{A\setminus\{i\}}}\cD v_{A\setminus\{i\}}\,
e^{S_{W,H}^{A\setminus\{i\}}(v_{A\setminus\{i\}})} 
\int\limits_{H^{2|2}}\cD v_i \,
e^{2\sum_{j\in A\setminus\{i\}}W_{ij}(1+\sk{v_i,v_j})}e^{H_i(1+\sk{v_i,o})}
   e^{\sum_{j\in A}\sk{a_j,v_j}},\nonumber
\end{align}
where we used $\sk{v_i,v_i}=-1$ for $v_i\in\htwo$ and the definitions of
$C_i$ and $V_i$.
To merge the exponents, we observe that 
\begin{align}
  S_{W,H}^A(v_A)
  =S_{W,H}^{A\setminus\{i\}}(v_{A\setminus\{i\}})+
2\sum_{j\in A\setminus\{i\}}W_{ij}(1+\sk{v_i,v_j})+H_i(1+\sk{v_i,o}).
\end{align}
Together with \eqref{eq:begin-induction} this concludes the induction step: 
\begin{align}
  E^{A'}_{W,H}\left[e^{\sum_{j\in A'}\sk{a_j',v_j}}\right]
  =&\int\limits_{(H^{2|2})^A}\cD v_A\,
e^{S_{W,H}^A(v_A)} e^{\sum_{j\in A}\sk{a_j,v_j}}
=E^A_{W,H}\left[e^{\sum_{j\in A}\sk{a_j,v_j}}\right].
\end{align}
\end{proof}

Using the reduction to the effective model, we can prove the identities 
in distribution for certain horospherical coordinates as stated 
in Corollary \ref{cor:distribution-effective-model}. 

\medskip\noindent 
\begin{proof}[Proof of Corollary \ref{cor:distribution-effective-model}]
First we prove claim \eqref{eq:identity-laplace-trafos} under the extra 
assumption that $h_i>0$ for all $i\in\Lambda_N$. 
It suffices to show that the joint Laplace transforms of 
$(|\B_j|^{-1}\sum_{i\in \B_j}e^{u_i},|\B_j|^{-1}\sum_{i\in \B_j}s_ie^{u_i})_{j\in A}$ 
and $(e^{u_j},s_je^{u_j})_{j\in A}$ with respect to $P^{\Lambda_N}_{W,h}$ and   
$P^A_{W,H}$, respectively, agree on a non-empty open set and
are finite there. Thus, we will show the following equality for all 
$(c_j)_{j\in A}\in(0,\infty)^A$ and all $c_j'\in(-H_j,H_j)$, $j\in A$:
\begin{align}
\label{eq:claim-laplace-trafo-mean-along-antichain}
  E^{\Lambda_N}_{W,h}\left[e^{-\sum_{j\in A}|\B_j|^{-1}\sum_{i\in \B_j}(c_j e^{u_i}+c_j' s_ie^{u_i})}\right]
=E^A_{W,H}\left[e^{-\sum_{j\in A}(c_j e^{u_j}+c_j's_je^{u_j})}\right]. 
\end{align}
We rewrite this claim in cartesian coordinates. 
Using $-c_j e^{u_i}-c_j's_ie^{u_i}=-c_j(x_i+z_i)-c_j'y_i=\sk{v_i,a_j}$
with $a_j=(-c_j,-c_j',c_j,0,0)$ for $j\in A$, $i\in \B_j$ on the left-hand side
and $-c_j e^{u_j}-c_j's_je^{u_j}=\sk{v_j,a_j}$ for $j\in A$ on the right-hand side, 
it takes the following form: 
\begin{align}
  \label{eq:claim-laplace-trafo-mean-along-antichain-reformulated}
E^{\Lambda_N}_{W,h}\left[e^{\sum_{j\in A}\sk{v_j,a_j}}\right]
=  E^{\Lambda_N}_{W,h}\left[e^{\sum_{j\in A}|\B_j|^{-1}\sum_{i\in \B_j}\sk{v_i,a_j}}\right]
=E^A_{W,H}\left[e^{\sum_{j\in A}\sk{v_j,a_j}}\right]. 
\end{align}
Since $\sk{H_jo+a_j,H_jo+a_j}=c_j'^2-2c_jH_j-H_j^2<0$, $H_j+c_j>0$, and thus 
$H_jo+a_j\in\hthree_+$, the claim, including finiteness of the expectations in 
\eqref{eq:claim-laplace-trafo-mean-along-antichain-reformulated}, follows from Theorem 
\ref{thm:reduced-model}.

Now we drop the extra assumption $h_i>0$ for all $i\in\Lambda_N$. 
Let $\cN:=\{i\in\Lambda_N:h_i>0\}$,  
\begin{align}
\cH:=\{ (h_i')_{i\in\Lambda_N}: h_i'=h_i\text{ for }i\in\cN, 0< h_i'\le 1  
\text{ for }i\notin\cN, h'\text{ compatible with }A\}.
\end{align}
The probability density of $P^{\Lambda_N}_{W,h'}(du_{\Lambda_N}\, ds_{\Lambda_N})$
specified in \eqref{eq:P-in-horosph-coor} is dominated by  
\begin{align}
c\, e^{-\frac12\sum_{i,j\in\Lambda_N}W_{ij}(B_{ij}-1)}
e^{-\sum_{i\in\Lambda_N}h_i(B_i-1)}p((e^{u_i})_{i\in\Lambda_N})
\prod_{i\in\Lambda_N}\frac{e^{-u_i}}{2\pi}
\end{align}
with some numerical polynomial $p$ with positive coefficients and a constant 
$c=c((h_i)_{i\in\cN})$, uniformly in $h'\in\cH$. This dominating function 
is integrable by our assumption on the pinning strength function $h$, cf.\
remarks below formula \eqref{eq:E-Lambda-W}. 
Hence, by dominated convergence, we obtain 
\begin{align}
  \label{eq:limit1}
\lim_{h'\downarrow h}E^{\Lambda_N}_{W,h'}[f(u_{\Lambda_N},s_{\Lambda_N})]
=  E^{\Lambda_N}_{W,h}[f(u_{\Lambda_N},s_{\Lambda_N})]
\end{align}
for every bounded measurable function $f:(\R^2)^{\Lambda_N}\to\R$. 
The same argument applies 
when we replace $\Lambda_N$ by $A$ and $h$ by $H|_A$. Note that $H|_A$ 
inherits the assumption on the pinning strength function phrased 
below formula \eqref{eq:E-Lambda-W}. This yields 
\begin{align}
  \label{eq:limit2}
\lim_{H'\downarrow H}E^A_{W,H'}[f(u_A,s_A)]
=  E^A_{W,H}[f(u_A,s_A)]
\end{align}
for every bounded measurable function $f:(\R^2)^A\to\R$. The left-hand sides
of \eqref{eq:limit1} and \eqref{eq:limit2} agree for $h'\in\cH$ and the
induced $H'$, which allows us to conclude.  

The first claim in Corollary \ref{cor:distribution-effective-model} is an 
immediate consequence since $(e^{u_j},s_je^{u_j})_{j\in B}$ uniquely determines
$(u_j,s_j)_{j\in B}$ and $\B_j=\{j\}$ holds for $j\in B=A\cap\Lambda_N$.
\end{proof}

\section{Tightness}

To prove Theorem \ref{thm:main} we use a slight reformulation of a bound
from Lemma 4 in \cite{disertori-spencer-zirnbauer2010}. This will be an
essential ingredient of the argument. Recall the abbreviation 
$B_{ij}$ introduced in \eqref{eq:def-Bij}. 

\begin{lemma}[Alignment of two spins]
\label{le:estimate-Bij}
Consider the $\htwo$-model on a finite vertex set $\Lambda$ with 
arbitrary weights $W$ and pinning $h$.
For $W_{ij}>0$ and $\delta>W_{ij}^{-1}$, the following bound holds
\begin{align}
P^\Lambda_{W,h}(B_{ij}\ge 1+\delta)\le W_{ij}\delta e^{-(W_{ij}\delta-1)}.
\end{align}
\end{lemma}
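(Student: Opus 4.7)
The plan is a standard Chernoff bound, reducing the tail estimate to a bound on an exponential moment:
\begin{equation*}
P^\Lambda_{W,h}(B_{ij}\ge 1+\delta)\le e^{-\lambda\delta}\,E^\Lambda_{W,h}\bigl[e^{\lambda(B_{ij}-1)}\bigr]\quad(\lambda>0),
\end{equation*}
and then bounding the moment generating function on the right via a Ward-type identity coming from the supersymmetric normalization $E^\Lambda_{\cdot,\cdot}[1]=1$.

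For the moment bound, fix $\lambda\in(0,W_{ij})$ and let $\tilde W$ agree with $W$ except at the edge $\{i,j\}$, where $\tilde W_{ij}:=W_{ij}-\lambda>0$. Reading the horospherical density \eqref{eq:P-in-horosph-coor} for both $W$ and $\tilde W$ and taking the ratio, the identity $E^\Lambda_{\tilde W,h}[1]=1$ becomes
\begin{equation*}
E^\Lambda_{W,h}\Bigl[e^{\lambda(B_{ij}-1)}\,\frac{\det M_{\tilde W}}{\det M_W}\Bigr]=1.
\end{equation*}
Now $M_{\tilde W}-M_W=-\lambda e^{u_i+u_j}(e_i-e_j)(e_i-e_j)^\top$ is a rank-one perturbation, so the matrix-determinant lemma yields
\begin{equation*}
\frac{\det M_{\tilde W}}{\det M_W}=1-\lambda e^{u_i+u_j}(e_i-e_j)^\top M_W^{-1}(e_i-e_j).
\end{equation*}
Interpreting the quadratic form as the effective resistance between $i$ and $j$ in the resistor network with edge conductances $W_{kl}e^{u_k+u_l}$ and ground conductances $h_k e^{u_k}$, Rayleigh monotonicity against the single-edge subnetwork $\{i,j\}$ gives $(e_i-e_j)^\top M_W^{-1}(e_i-e_j)\le 1/(W_{ij}e^{u_i+u_j})$, hence $\det M_{\tilde W}/\det M_W\ge 1-\lambda/W_{ij}>0$. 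Replacing this positive factor inside the expectation by its lower bound produces
\begin{equation*}
E^\Lambda_{W,h}\bigl[e^{\lambda(B_{ij}-1)}\bigr]\le\frac{W_{ij}}{W_{ij}-\lambda}.
\end{equation*}

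Plugging this into the Chernoff bound and minimizing in $\lambda$, the optimum occurs at $\lambda^{*}=W_{ij}-1/\delta$, which lies in $(0,W_{ij})$ precisely when $\delta>W_{ij}^{-1}$ as hypothesized. Substituting gives exactly $W_{ij}\delta\,e^{-(W_{ij}\delta-1)}$. The only genuinely delicate step is the Rayleigh bound on the determinant ratio; its validity reduces to the observation that $M_{\tilde W}$ with $\tilde W_{ij}\ge 0$ is still of the Laplacian-plus-pinning form, hence positive semi-definite, so that $\det M_{\tilde W}\ge 0$. Everything else is Markov's inequality and one-variable calculus.
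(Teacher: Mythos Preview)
Your proof is correct and follows the same Chernoff-plus-optimization scheme as the paper. The paper obtains the moment bound $E^\Lambda_{W,h}[e^{W_{ij}\gamma(B_{ij}-1)}]\le(1-\gamma)^{-1}$ by citing Lemma~3 of \cite{disertori-spencer-zirnbauer2010}, whereas you reprove exactly this estimate (with $\lambda=W_{ij}\gamma$) via the weight-shift identity $E^\Lambda_{\tilde W,h}[1]=1$ and the rank-one determinant formula; your Rayleigh/linearity argument for $\det M_{\tilde W}/\det M_W\ge 1-\lambda/W_{ij}$ is precisely the content of that cited lemma, so the two proofs coincide once the reference is unpacked.
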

\begin{proof}
The proof is based on Lemmas 3 and 4 in \cite{disertori-spencer-zirnbauer2010},
which generalize to non-constant weights $W_{ij}$ in a straightforward
way. Using first Chebyshev's inequality and then Lemma 3 
in \cite{disertori-spencer-zirnbauer2010}, we obtain for all $\delta>0$
and all $\gamma\in(0,1)$
\begin{align}
P^\Lambda_{W,h}(B_{ij}\ge 1+\delta)
=E^\Lambda_{W,h}[1_{\{B_{ij}\ge 1+\delta\}}]
\le E^\Lambda_{W,h}[e^{W_{ij}\gamma (B_{ij}-1-\delta)}]
\le (1-\gamma)^{-1}e^{-W_{ij}\gamma\delta}.
\label{eq:upper-bound-Bij-DSZ10}
\end{align}
The upper bound is of the form $e^{-f(\gamma)}$ with 
$f(\gamma)=\log(1-\gamma)+\alpha\gamma$ and $\alpha=W_{ij}\delta$. 
In order to optimize this upper bound,
we maximize $f$. Observe that $f'(\gamma)=\alpha-(1-\gamma)^{-1}$, which equals
zero iff $\gamma=\gamma_c=1-\alpha^{-1}$. By assumption $\alpha>1$, and hence 
the critical point $\gamma_c$ belongs to $(0,1)$. Using 
$f(1-\alpha^{-1})=-\log\alpha+\alpha-1$, the claim follows. 
\end{proof}

Using Lemma \ref{le:estimate-Bij} and the reduction to the effective model,
we prove our main result. 

\medskip\noindent\begin{proof}[Proof of Theorem \ref{thm:main}]
We treat the two cases, uniform pinning (``U'') and pinning at one point 
(``1P'') simultaneously. 

Let $\rho>0$. It suffices to find $M=M(\rho)>0$ such that for all $N\in\N$,
$\epsilon>0$, and $p,q\in\Lambda_N$ with $p\neq q$, one has 
\begin{align}
\label{eq:U}
\text{case U: }\quad &P^{\Lambda_N}_{W,\epsilon 1}(|u_p-u_q|\ge M)\le\rho,\\ 
\label{eq:1P}
\text{case 1P: }\quad &P^{\Lambda_N}_{W,\epsilon\delta_p}(|u_p-u_q|\ge M)\le\rho. 
\end{align}
Note that in \eqref{eq:1P} the vertex $p$ appears twice, as pinning point
and in the variable $u_p$. This suffices indeed because for $i_0,p,q\in\Lambda_N$
we have the estimate 
\begin{align}
  P^{\Lambda_N}_{W,\epsilon\delta_{i_0}}(|u_p-u_q|\ge M)
  \le  P^{\Lambda_N}_{W,\epsilon\delta_{i_0}}\left(|u_{i_0}-u_p|\ge\frac{M}{2}\right)+
   P^{\Lambda_N}_{W,\epsilon\delta_{i_0}}\left(|u_{i_0}-u_q|\ge\frac{M}{2}\right).
\end{align}
Let $N\in\N$,
$\epsilon>0$, and $p,q\in\Lambda_N$ with $p\neq q$. Recall the notation introduced at the
beginning of Section \ref{subsec:hierarchical-h22}. 
We abbreviate $d=d(p,q)\ge 1$. 
Let $\overline 0:=1$, $\overline 1:=0$. 
For $j=(j_0,\ldots,j_{|j|-1})\in\T^N\setminus\{\emptyset\}$, let 
$\overline j:=(\overline j_0,j_1,j_2,\ldots,j_{|j|-1})$ be the nearest 
neighbor of $j$ on the same level. 
We consider the set $A\subseteq\T^N$ given by 
\begin{align}
  \label{eq:def-A}
A:=&\{p,q\}\cup
     \{j\in\T^N\setminus\{\emptyset\}:j\not\preceq p, j\not\preceq q,(\cut(j)\preceq p\text{ or }
     \cut(j)\preceq q)\}\\
=&\{p,q\}\cup\{\overline{\cut^l(p)}:0\le l\le N-1, l\neq d-1\}
\cup\{\overline{\cut^l(q)}:0\le l\le d-2\}.\nonumber
\end{align}
An illustration is given in Figure \ref{fig:antichain}. 

\begin{figure}
  \centering
\includegraphics[width=0.8\textwidth]{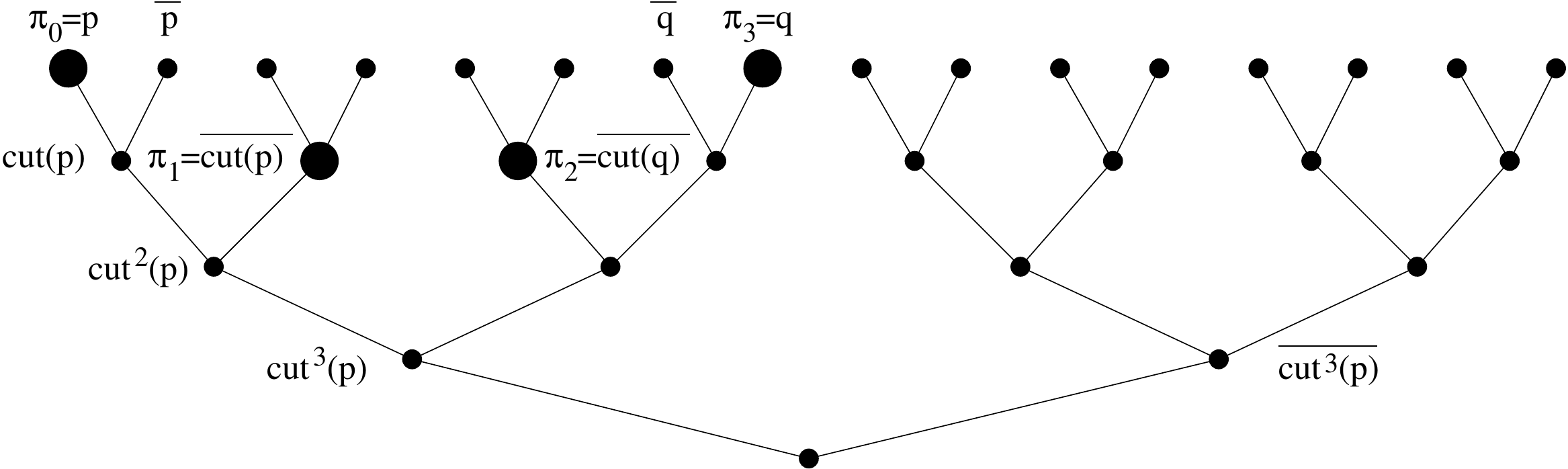}
  \caption{The vertices $p,q\in\Lambda_4$ have hierarchical distance 
$d=3$. The maximal antichain from \eqref{eq:def-A} equals 
$A:=\{p,q,\overline p,\overline q,\overline{\cut(p)},\overline{\cut(q)},\overline{\cut^3(p)}\}$. The path $\pi$ in \eqref{eq:def-pi} is given 
by $\pi=(p,\overline{\cut(p)},\overline{\cut(q)},q)$.}
  \label{fig:antichain}
\end{figure}

Note that $A$ is a maximal antichain. Furthermore, it is compatible with 
both pinning functions, $h=\epsilon 1$ (case U) and $h=\epsilon\delta_p$
(case 1P). We remark that this is the reason why we need $p$ as a pinning point 
in formula \eqref{eq:1P}. 

We consider the path $\pi=(\pi_l)_{0\le l\le m}$ in the antichain $A$ 
from $\pi_0=p$ to $\pi_m=q$ given by the concatenation of $p$, 
$(\overline{\cut^l(p)})_{1\le l\le d-2}$, the reversed path of 
$(\overline{\cut^l(q)})_{1\le l\le d-2}$, and $q$. In other words,
for $m=d=1$, we have $\pi=(p,q)$ and for $d\ge 2$, we have $m=2d-3$, and 
\begin{align}
  \label{eq:def-pi}
\pi_l=\left\{
  \begin{array}{ll}
p &\text{for } l=0,\\
    \overline{\cut^l(p)} & \text{for } 1\le l\le d-2,\\
    \overline{\cut^{m-l}(q)} & \text{for } d-1\le l\le m-1, \\
q &\text{for } l=m.
  \end{array}
\right.
\end{align}
Note that we exclude on purpose the points $\overline p=\overline{\cut^0(p)}$
and $\overline q=\overline{\cut^0(q)}$ in the second and third line of
\eqref{eq:def-pi} (though they belong to the antichain) since they do not
help in the estimates below. 

In the case $d\ge 2$, the edges $\{\pi_l,\pi_{l+1}\}$ (with $l=0,\ldots,m-1$) of the path
$\pi$ have the weights 
\begin{align}
\beta_l':=
W_{\pi_l\pi_{l+1}}=&2^{\ell(\pi_l)+\ell(\pi_{l+1})}w(\ell(\pi_l\wedge \pi_{l+1}))\nonumber\\
=&\left\{
  \begin{array}{ll}
2^{2l+1}w(l+2)=\beta_l   & \text{if } 0\le l\le d-3,\\
2^{2d-4}w(d)
=\frac12\beta_{d-2}   & \text{if } l=d-2,\\
2^{2(m-l)-1}w(m-l+1)=\beta_{m-l-1}    & \text{if } d-1\le l\le m-1.
  \end{array}
                                        \right.
\label{eq:def-beta-prime}                                        
\end{align}
In the case $d=1$, the single edge $\{p,q\}$ of the path $\pi$ has the weight
$\beta_0'':=W_{pq}=w(1)$. We choose $\delta_0''>0$ large enough such that
$\beta_0''\delta_0''>1$ and $\beta_0''\delta_0'' e^{-(\beta_0''\delta_0''-1)}<\rho$.
We show below the following claim.\\
\textit{Claim: }There is a sequence $(\delta_l)_{l\in\N_0}$ of positive numbers
    (independent of $p$ and $q$) such that  
\begin{align}
  \beta_l\delta_l>1\text{ for all }l,\quad
  \sum_{l=0}^\infty\arcosh(1+\delta_l)<\infty
\quad\mbox{and}\quad
\sum_{l=0}^\infty\beta_l\delta_le^{-\beta_l\delta_l+1}<\frac{\rho}{2}.
\label{eq:sum-arcosh-finite}
\end{align}
Using this claim, we show first that the estimates \eqref{eq:U} and \eqref{eq:1P} hold. 
We define 
\begin{align}
M:=\max\Big\{\sup_{d'\ge 2}\Big( 
  2\sum_{l=0}^{d'-3}\arcosh(1+\delta_l)+\arcosh(1+2\delta_{d'-2})\Big),
  \arcosh(1+\delta_0'')\Big\}.
  \label{eq:def-M}
\end{align}
As a consequence of \eqref{eq:sum-arcosh-finite}, the sequence 
$(\delta_l)_l$ is bounded and $M$ is finite.

We consider now the case $d\ge 2$. 
We introduce 
\begin{align}
  \label{eq:def-delta-strich}
\delta_l'=&\left\{
  \begin{array}{ll}
\delta_l   & \text{for } 0\le l\le d-3,\\
2\delta_{d-2}   & \text{for } l=d-2,\\
\delta_{m-l-1}    & \text{for } d-1\le l\le m-1.
  \end{array}
\right.  
\end{align}
Note that 
\begin{align}
\sum_{l=0}^{m-1}\arcosh(1+\delta_l')=
  2\sum_{l=0}^{d-3}\arcosh(1+\delta_l)+\arcosh(1+2\delta_{d-2})
\le M.
\label{eq:lower-bound-M}
\end{align}
Since $A$ is a maximal antichain containing $p$ and $q$, the distributions 
of $u_p-u_q$ with respect to $P^{\Lambda_N}_{W,h}$ and $P^A_{W,H}$ coincide by 
Corollary \ref{cor:distribution-effective-model}; here $H$ is the 
extension of the pinning function $h$ given in \eqref{eq:def-H}. 
Using \eqref{eq:lower-bound-M} in the first inequality, we estimate 
\begin{align}
P^{\Lambda_N}_{W,h}(|u_p-u_q|\ge M)
=&P^A_{W,H}(|u_{\pi_0}-u_{\pi_m}|\ge M)  \nonumber\\
\le &P^A_{W,H}\Big(|u_{\pi_0}-u_{\pi_m}|\ge \sum_{l=0}^{m-1}\arcosh(1+\delta_l')
\Big)  
\nonumber\\
\le & P^A_{W,H}\Big(\sum_{l=0}^{m-1}|u_{\pi_l}-u_{\pi_{l+1}}|\ge \sum_{l=0}^{m-1}\arcosh(1+\delta_l')\Big)  \nonumber\\
\le & \sum_{l=0}^{m-1} P^A_{W,H}(|u_{\pi_l}-u_{\pi_{l+1}}|\ge\arcosh(1+\delta_l')) 
\nonumber\\
= & \sum_{l=0}^{m-1} P^A_{W,H}(\cosh(u_{\pi_l}-u_{\pi_{l+1}})\ge 1+\delta_l')  .
\label{eq:sum-P-cosh}
\end{align}
Recall the notation $B_{ij}$ from formula \eqref{eq:def-Bij}. In particular, 
$\cosh(u_i-u_j)\le B_{ij}$ for $i,j\in A$. To estimate the $l$-th 
summand on the right-hand side of formula \eqref{eq:sum-P-cosh}
we observe that $W_{\pi_l\pi_{l+1}}=\beta_l'$ and $\beta_l'\delta_l'=\beta_l\delta_l>1$ 
as a consequence of \eqref{eq:def-beta-prime}, \eqref{eq:def-delta-strich}, and Claim \eqref{eq:sum-arcosh-finite}
so that Lemma \ref{le:estimate-Bij} is applicable and yields 
\begin{align}
P^A_{W,H}(\cosh(u_{\pi_l}-u_{\pi_{l+1}})\ge 1+\delta_l')  
\le  P^A_{W,H}(B_{\pi_l\pi_{l+1}}\ge 1+\delta_l')  
\le \beta_l'\delta_l' e^{-(\beta_l'\delta_l'-1)}.
\end{align}
Summing over $l$ and using Claim \eqref{eq:sum-arcosh-finite} in the last step, 
we conclude 
\begin{align}
&P^{\Lambda_N}_{W,h}(|u_p-u_q|\ge M)
\le \sum_{l=0}^{m-1}\beta_l'\delta_l' e^{-(\beta_l'\delta_l'-1)}\nonumber\\
= & 2\sum_{l=0}^{d-3}\beta_l\delta_l e^{-(\beta_l\delta_l-1)}
+\beta_{d-2}\delta_{d-2} e^{-(\beta_{d-2}\delta_{d-2}-1)}
\le 2\sum_{l=0}^\infty\beta_l\delta_l e^{-(\beta_l\delta_l-1)}<\rho
.
\end{align}
For $d=1$, using $\arcosh(1+\delta_0'')\le M$ from \eqref{eq:def-M} and
again  Lemma \ref{le:estimate-Bij}, it also holds
\begin{align}
  P^{\Lambda_N}_{W,h}(|u_p-u_q|\ge M)\le P^{\Lambda_N}_{W,h}(\cosh(u_p-u_q)\ge 1+\delta_0'')
  \le \beta_0''\delta_0'' e^{-(\beta_0''\delta_0''-1)}<\rho. 
\end{align}

It remains to show Claim \eqref{eq:sum-arcosh-finite}. Assume that the hypothesis
\eqref{eq:assumption-main-thm} of Theorem \ref{thm:main}
holds. To ensure $\sum_{l=0}^\infty\arcosh(1+\delta_l)<\infty$, it is necessary
that $\delta_l\to 0$ as $l\to\infty$. On the other hand, to ensure
$\sum_{l=0}^\infty\beta_l\delta_le^{-\beta_l\delta_l+1}<\frac{\rho}{2}$, we will 
choose $\delta_l$ such that $\beta_l\delta_l\to\infty$ fast enough as $l\to\infty$ and 
$\beta_l\delta_l$ is large enough for a sufficiently large initial piece 
$l=0,\ldots,l_1$. More precisely, we proceed as follows. 

From assumption
\eqref{eq:assumption-main-thm} we know $\lim_{l\to\infty}\beta_l=\infty$. 
Take $l_0\in\N$ so large that $\log\log\sqrt{\beta_l}$ is well-defined
and positive for all $l\ge l_0$. For these $l$, we set 
\begin{align}
\label{eq:def-tilde-delta-l}
\tilde\delta_l:=
\frac{1}{\beta_l}\Big(1+\log\sqrt{\beta_l}
+\frac32\log\log\sqrt{\beta_l}\Big)>\frac{1}{\beta_l}.
\end{align}
We observe that 
\begin{align}
&\sum_{l=l_0}^\infty\beta_l\tilde\delta_le^{-\beta_l\tilde\delta_l+1}
=\sum_{l=l_0}^\infty\Big(1+\log\sqrt{\beta_l}+\frac32\log\log\sqrt{\beta_l}\Big)
\exp\Big(-\log\sqrt{\beta_l}-\frac32\log\log\sqrt{\beta_l}\Big)
\nonumber\\
=&\sum_{l=l_0}^\infty\Big(1+\log\sqrt{\beta_l}+\frac32\log\log\sqrt{\beta_l}\Big)
\frac{1}{\sqrt{\beta_l}}(\log\sqrt{\beta_l})^{-\frac32}
   <\infty.
   \label{eq:calculation}
\end{align}
The last estimate used assumption \eqref{eq:assumption-main-thm}.
However, we remark that its full strength is not used here, but only below. 
The estimate \eqref{eq:calculation} implies that there exists $l_1\ge l_0$ such that 
\begin{align}
   \sum_{l=l_1}^\infty\beta_l\tilde\delta_le^{-\beta_l\tilde\delta_l+1}<\frac{\rho}{4}.
\end{align}
We choose $\delta_l>\beta_l^{-1}$ for $0\le l\le l_1-1$ large enough, such that 
\begin{align}
   \sum_{l=0}^{l_1-1}\beta_l\delta_le^{-\beta_l\delta_l+1}<\frac{\rho}{4}.
\end{align}
For $l\ge l_1$, we set $\delta_l:=\tilde\delta_l$. Then, the condition for 
the second series in \eqref{eq:sum-arcosh-finite} follows.
Observe that $\beta_l\delta_l>1$ for all $l\in\N_0$ by  
\eqref{eq:def-tilde-delta-l} and our choice of $\delta_l$ for small $l$. 

It remains to prove the inequality for the first series
in \eqref{eq:sum-arcosh-finite}, which is equivalent to the claim 
\begin{align}
\sum_{l=l_1}^\infty\arcosh(1+\delta_l)<\infty.
\end{align}
We estimate the $l$-th summand in the last series:
\begin{align}
\arcosh(1+\delta_l)\le &\sqrt{2\delta_l}
=\sqrt{\frac{2}{\beta_l}}\sqrt{1+\log\sqrt{\beta_l}
+\frac32\log\log\sqrt{\beta_l}}
=O\left(\sqrt{\frac{\log\beta_l}{\beta_l}}\right)
\end{align}
as $l\to\infty$, which is summable over $l$ by hypothesis \eqref{eq:assumption-main-thm}.
This completes the proof of Claim \eqref{eq:sum-arcosh-finite} and 
hence the proof of the theorem. 
\end{proof}

\begin{remark}
\label{rem:exp-complexity}
In the proof of Theorem \ref{thm:main} it is shown that for $p,q\in\Lambda_N$
with $d(p,q)=d$, the set 
\begin{align}
A:=\{p,q\}\cup\{\overline{\cut^l(p)}:0\le l\le N-1, l\neq d-1\}
\cup\{\overline{\cut^l(q)}:0\le l\le d-2\}
\end{align}
is a maximal antichain containing $p$ and $q$; see Figure 
\ref{fig:antichain} for an illustration. 
In order to study the distribution of $(u_p,u_q)$ with respect to 
$P^{\Lambda_N}_{W,h}$ with a pinning function $h$ such that $A$ is
compatible with it, 
cf.\ Figure \ref{fig:antichain2}, 
one can equally well study its distribution with respect 
to the reduced model $P^A_{W,H}$. Note that $|\Lambda_N|=2^N$, whereas 
$|A|=N+d$, resulting in an exponential decrease of complexity. More generally, 
one can construct a 
maximal antichain $A(B)$ containing a finite set $B\subseteq\Lambda_N$ as follows. 
The set $A(B)$ consists of $B$ and all vertices which are minimal elements with 
respect to $\preceq$ in the set $\{i\in\T^N:i\not\preceq j\text{ for all }
j\in B\}$.
For fixed $|B|$, as $N\to\infty$, the size of $A(B)$ is again bounded by $O(N)$. 
\end{remark}

\begin{appendix}
\section{Appendix: Grassmann algebras and supersymmetry operators}

\paragraph{Grassmann algebras.}
Here we give some more details on the Grassmann algebra $\cA$ underlying the 
$\htwo$-model with vertex set $\Lambda$. 
We consider a real vector space $V$ of finite dimension $K$. We assume
that $V$ has a basis $\B=\{\chi_1,\ldots,\chi_K\}$ consisting of 
$\xi_i$, $\eta_i$ for $i\in\Lambda$ 
and possibly additional Grassmann parameters. Then the Grassmann algebra 
$\cA$ is defined as the direct sum of exterior product spaces 
$\mathsf{\Lambda}^kV$ of $V$: 
$\cA=\bigoplus_{k=0}^K\mathsf{\Lambda}^kV=\cA_0\oplus\cA_1$ with the even 
(commuting) subalgebra 
$\cA_0=\bigoplus_{k=0}^{\lfloor K/2\rfloor}\mathsf{\Lambda}^{2k}V$
and the odd (anticommuting) subspace 
$\cA_1=\bigoplus_{k=0}^{\lfloor (K-1)/2\rfloor}\mathsf{\Lambda}^{2k+1}V$. 
Note that the body of an even element is the projection to the $0$th-component:
$\body:\cA_0\to\mathsf{\Lambda}^0V=\R$.
Every element $\omega$ in the Grassmann algebra $\cA$ can be uniquely written 
as follows:
\begin{align}
\label{eq:representation-omega}
\omega=\sum_{I\subseteq\{1,\ldots,K\}}c_I\chi^I,   
\end{align}
with coefficients $c_I\in\R$ and basis elements $\chi^I:=\chi_{i_1}\ldots\chi_{i_l}$ 
for $I=\{i_1,\ldots,i_l\}$ with $i_1<\cdots <i_l$ and $\chi^\emptyset=1$. 

\paragraph{Grassmann derivatives.}
For every basis element $\chi\in \B$ and every element $\omega\in\cA$,
we have a unique decomposition $\omega=\omega_0+\chi\omega_1$,
where $\omega_0$ and $\omega_1$ belong to the subalgebra of $\cA$ generated by
the elements of $\B\setminus\{\chi\}$. We define the Grassmann derivative
\begin{align}
\partial_\chi\omega:=\omega_1. 
\end{align}
Note that this derivative depends not only on $\chi$ and $\omega$, but 
also on the choice of the basis $\B$. This is in analogy to the ordinary 
partial derivative, which also depends on the choice of other variables
which are kept fixed.

\paragraph{Supersymmetry operators.}
We review now the (super)symmetry operators of the $\htwo$-model. 
For $n\in\N$, let $[n]:=\{1,2,\ldots,n\}$. We define the parity function 
\begin{align}
p:[5]\to\{0,1\},\quad p(i)=\left\{
  \begin{array}{ll}
    0 & i\in[3],\\
    1 & i\in\{4,5\}. 
  \end{array}\right.
\end{align}
For $i,j\in[5]$, we set 
\begin{align}
\sigma_{ij}:=(-1)^{p(i)+p(j)+p(i)p(j)}=\left\{  
  \begin{array}{ll}
    1 & i,j\in[3],\\
    -1 & \text{otherwise.}
  \end{array}\right.
\end{align}
Using the matrix
\begin{align}
g=(g_{ij})_{i,j\in[5]}:=\left(
  \begin{array}{rrr|rr}
    1 & 0 & 0 & 0 & 0\\
    0 & 1 & 0 & 0 & 0 \\
    0 & 0 & -1 & 0 & 0 \\ \hline
    0 & 0 & 0 & 0 & 1 \\
    0 & 0 & 0 & -1 & 0
  \end{array}\right)
\end{align}
the inner product of $v=(x,y,z,\xi,\eta)$ and $v'=(x',y',z',\xi',\eta')$
reads 
\begin{align}
\sk{v',v}=v'gv^t. 
\end{align}
We write $v=(v^i)_{i\in[5]}$ and $\partial_{v^i}=\frac{\partial}{\partial v^i}$.

\begin{definition}[Supersymmetry operators]
\label{def:supersym-operators}
For $i,j\in[5]$, we define the supersymmetry operators 
  \begin{align}
    \label{eq:def-Lij}
    L_{ij}=L_{ij}^v:=\sum_{k\in[5]}[v^kg_{kj}\partial_{v^i}
    -v^k\sigma_{ij}g_{ki}\partial_{v^j}]
  \end{align}
\end{definition}

We observe that $L_{ij}=-\sigma_{ji}L_{ji}$ for all $i,j$. Furthermore, 
$L_{11}=L_{22}=L_{33}=0$, 
\begin{itemize}
\item $L_{12}=y\partial_x-x\partial_y$ generates Euclidean rotations in
  the $x$-$y$-plane,
\item $-L_{13}=z\partial_x+x\partial_z$ and $-L_{23}=z\partial_y+y\partial_z$
generate Lorentz boosts in the $x$-$z$- and $y$-$z$-plane, respectively,
\item $L_{44}=-2\eta\partial_\xi$, $L_{45}=\xi\partial_\xi-\eta\partial_\eta$,
and $L_{55}=2\xi\partial_\eta$ are even operators, and 
\item $L_{14}=x\partial_\xi-\eta\partial_x$,  
$L_{24}=y\partial_\xi-\eta\partial_y$,  
$L_{15}=\xi\partial_x+x\partial_\eta$, 
$L_{25}=\xi\partial_y+y\partial_\eta$,  
$L_{34}=-(\eta\partial_z+z\partial_\xi)$, and 
$L_{35}=\xi\partial_z-z\partial_\eta$ are odd operators. 
\end{itemize}
We define 
\begin{align}
\label{eq:def-Q}
Q^v:=L_{15}-L_{24}
=x\partial_{\eta}-y\partial_{\xi}+\xi\partial_{x}+\eta\partial_{y}. 
\end{align}

We collect some basic facts on these supersymmetry operators. For more
details see Section 4 and Appendices B and C of 
\cite{disertori-spencer-zirnbauer2010}. 

\begin{fact}[Consequences of supersymmetry]
\label{le:L-annihilates-inner-prod} \hspace{2cm}

\begin{enumerate}
\item For $i,j\in[5]$, the following holds.
\begin{enumerate}
\item The operator $L_{ij}$ annihilates the inner product as follows
\begin{align}
\label{eq:L-annihilates-inner-product}
(L_{ij}^v+L_{ij}^{v'})\sk{v,v'}=0\quad\text{for all }v,v'\in\hthree.
\end{align}
Moreover, for $k\in\N$ and any smooth superfunction $f:\cA_0^{k^2}\to\cA_0$, 
as $v_1,\ldots,v_k$ run over $\hthree$, one has 
\begin{align}
\label{eq:L-annihilates-fn-of-inner-product}
\sum_{l=1}^k L_{ij}^{v_l}f((\sk{v_j,v_{j'}})_{1\le j,j'\le k}) =0.
\end{align}
\item \emph{Ward identity.} For any smooth superfunction 
$f:(\hthree_+)^k\to\cA_0$ with $k\in\N$ such that $f$ is fast decaying on 
$(\htwo)^k$, one has 
\begin{align}
\label{eq:ward-identity}
\int_{(\htwo)^k}\cD v_1\ldots\cD v_k\, \sum_{l=1}^k L_{ij}^{v_l}f(v_1,\ldots,v_k)=0. 
\end{align}
\end{enumerate}
\item The operator $Q$ annihilates the following inner products 
\begin{align}
\label{eq:Q-annihilates-inner-product}
(Q^v+Q^{v'})\sk{v,v'}=0, \quad Q^v\sk{v,o}=0,
\quad\text{for all }v,v'\in\hthree.
\end{align}
\end{enumerate}
\end{fact}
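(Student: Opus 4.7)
The plan is to verify each identity largely by direct computation, exploiting that the operators $L_{ij}^v$ are constructed precisely to generate infinitesimal transformations preserving the orthosymplectic form with metric $g$.

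First I would establish $(L_{ij}^v+L_{ij}^{v'})\sk{v,v'}=0$. Writing $\sk{v,v'}=\sum_{k,l}v^k g_{kl}(v')^l$ and applying the definition \eqref{eq:def-Lij} via the graded Leibniz rule, the sum splits into groups of terms indexed by whether the derivative in $L_{ij}^v$ (respectively $L_{ij}^{v'}$) acts on the $v^i$- or $v^j$-slot; these cancel pairwise using the symmetry properties of $g$ encoded in the sign $\sigma_{ij}$. This is the infinitesimal version of the statement that the linear maps represented by the $L_{ij}$ preserve the bilinear form defined by $g$. I would handle the three cases (both indices in $[3]$, both in $\{4,5\}$, one of each) uniformly through $\sigma_{ij}$, using the explicit forms listed in the bullet points right after Definition \ref{def:supersym-operators} as sanity checks. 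The extension \eqref{eq:L-annihilates-fn-of-inner-product} then follows from the graded chain rule: since each $L_{ij}^{v_l}$ is a graded derivation on smooth superfunctions of $(v_1,\ldots,v_k)$, one expands
\begin{align*}
\sum_{l=1}^k L_{ij}^{v_l}f\big((\sk{v_m,v_{m'}})_{m,m'}\big)
=\sum_{m,m'}\big(\partial_{\sk{v_m,v_{m'}}}f\big)\sum_{l=1}^k L_{ij}^{v_l}\sk{v_m,v_{m'}},
\end{align*}
and the inner sum receives contributions only from $l\in\{m,m'\}$ and collapses to $L_{ij}^{v_m}\sk{v_m,v_{m'}}+L_{ij}^{v_{m'}}\sk{v_m,v_{m'}}=0$ by the first identity (applied with $v=v'=v_m$ in the diagonal case $m=m'$).

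For the Ward identity \eqref{eq:ward-identity}, since $L_{ij}^{v_l}$ acts only on the single argument $v_l$, Fubini reduces the claim to showing that $\int_{\htwo}\cD v\,L_{ij}^v g(v)=0$ for every fast-decaying superfunction $g$ on $\htwo$. The first identity specialized to $v=v'$ gives $L_{ij}^v\sk{v,v}=0$, so $L_{ij}^v$ is tangent to the constraint $\sk{v,v}=-1$ and descends to a super vector field on $\htwo$. Combined with the invariance of the superintegration form $\cD v$ under the corresponding super-flow, the integral of a Lie derivative vanishes, and the fast-decay hypothesis kills any boundary contribution. This is exactly the content of Section 4 and Appendix C of \cite{disertori-spencer-zirnbauer2010}, which I would cite rather than redo the Berezin change-of-variables calculus.

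Part 2 is a direct consequence of what precedes. Since $Q=L_{15}-L_{24}$, the identity $(Q^v+Q^{v'})\sk{v,v'}=0$ follows from two applications of 1(a). For $Q^v\sk{v,o}=0$, note that $o=(0,0,1,0,0)$ gives $\sk{v,o}=-z$, while $Q^v=x\partial_\eta-y\partial_\xi+\xi\partial_x+\eta\partial_y$ contains no $\partial_z$, so $Q^v(-z)=0$. The only genuine obstacle in the whole proof is the Ward identity: its rigorous justification requires verifying that the superintegration form is preserved under the super-flow generated by $L_{ij}^v$, which is delicate but standard and is precisely why I would quote the existing argument rather than reproduce the Berezin-level calculation.
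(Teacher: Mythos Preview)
Your proposal is correct, and in fact goes further than the paper does: the paper does not prove this statement at all. It is recorded as a ``Fact'' with the remark that more details can be found in Section~4 and Appendices~B and~C of \cite{disertori-spencer-zirnbauer2010}, and no proof is given in the text. Your sketch supplies exactly the computations and citations one would expect: direct verification of \eqref{eq:L-annihilates-inner-product} from the definition of $L_{ij}$ and the structure of $g$, the graded chain rule for \eqref{eq:L-annihilates-fn-of-inner-product}, reduction of the Ward identity to a single variable via Fubini and then appeal to the invariance of $\cD v$ established in \cite{disertori-spencer-zirnbauer2010}, and the observation that $Q^v$ contains no $\partial_z$ for the second claim in part~2. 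All of this is sound and matches the standard argument behind the cited reference.
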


\end{appendix}

\paragraph{Acknowledgements.}
The authors were supported by the DFG Priority Programme 2265 Random 
Geometric Systems.


\begin{thebibliography}{BCHS21}

\bibitem[BCH21]{bauerschmidt-crawford-helmuth-percolation-transition2021}
R.~Bauerschmidt, N.~Crawford, and T.~Helmuth.
\newblock Percolation transition for random forests in $d\ge 3$.
\newblock preprint, arXiv:2107.01878, 2021.

\bibitem[BCHS21]{bauerschmidt-crawford-helmuth-swan-spanning-forests2021}
R.~Bauerschmidt, N.~Crawford, T.~Helmuth, and A.~Swan.
\newblock Random spanning forests and hyperbolic symmetry.
\newblock {\em Comm. Math. Phys.}, 381(3):1223--1261, 2021.

\bibitem[BH21]{bauerschmidt-helmuth-survey2021}
R.~Bauerschmidt and T.~Helmuth.
\newblock Spin systems with hyperbolic symmetry: a survey.
\newblock preprint, arXiv:2109.02566, 2021.

\bibitem[BHS21]{bauerschmidt-helmuth-swan-dynkin-isomorphism2021}
R.~Bauerschmidt, T.~Helmuth, and A.~Swan.
\newblock The geometry of random walk isomorphism theorems.
\newblock {\em Ann. Inst. Henri Poincar\'{e} Probab. Stat.}, 57(1):408--454,
  2021.

\bibitem[Cra21]{crawford2021}
N.~Crawford.
\newblock Supersymmetric {H}yperbolic {$\sigma$}-{M}odels and {B}ounds on
  {C}orrelations in {T}wo {D}imensions.
\newblock {\em J. Stat. Phys.}, 184(3):Paper No. 32, 2021.

\bibitem[DMR17]{disertori-merkl-rolles2017}
M.~Disertori, F.~Merkl, and Silke~W.W. Rolles.
\newblock A supersymmetric approach to martingales related to the
  vertex-reinforced jump process.
\newblock {\em ALEA Lat. Am. J. Probab. Math. Stat.}, 14(1):529--555, 2017.

\bibitem[DMR19]{disertori-merkl-rolles2019}
M.~Disertori, F.~Merkl, and S.W.W. Rolles.
\newblock Martingales and some generalizations arising from the supersymmetric
  hyperbolic sigma model.
\newblock {\em ALEA Lat. Am. J. Probab. Math. Stat.}, 16(1):179--209, 2019.

\bibitem[DS10]{disertori-spencer2010}
M.~Disertori and T.~Spencer.
\newblock Anderson localization for a supersymmetric sigma model.
\newblock {\em Comm. Math. Phys.}, 300(3):659--671, 2010.

\bibitem[DSZ10]{disertori-spencer-zirnbauer2010}
M.~Disertori, T.~Spencer, and M.R. Zirnbauer.
\newblock Quasi-diffusion in a 3{D} supersymmetric hyperbolic sigma model.
\newblock {\em Comm. Math. Phys.}, 300(2):435--486, 2010.

\bibitem[Fre21]{fresta2021}
L.~Fresta.
\newblock Supersymmetric cluster expansions and applications to random
  {S}chr\"{o}dinger operators.
\newblock {\em Math. Phys. Anal. Geom.}, 24(1):Paper No. 4, 41, 2021.

\bibitem[KP21]{kozma-peled2021}
G.~Kozma and R.~Peled.
\newblock Power-law decay of weights and recurrence of the two-dimensional
  {VRJP}.
\newblock {\em Electron. J. Probab.}, 26:Paper No. 82, 19, 2021.

\bibitem[MRT19]{merkl-rolles-tarres2019}
F.~Merkl, S.W.W. Rolles, and P.~Tarr\`es.
\newblock Convergence of vertex-reinforced jump processes to an extension of
  the supersymmetric hyperbolic nonlinear sigma model.
\newblock {\em Probab. Theory Related Fields}, 173(3-4):1349--1387, 2019.

\bibitem[Sab21]{sabot-polynomial-decay2021}
C.~Sabot.
\newblock Polynomial localization of the 2{D}-vertex reinforced jump process.
\newblock {\em Electron. Commun. Probab.}, 26:Paper No. 1, 9, 2021.

\bibitem[ST15]{sabot-tarres2012}
C.~Sabot and P.~Tarr{\`e}s.
\newblock Edge-reinforced random walk, vertex-reinforced jump process and the
  supersymmetric hyperbolic sigma model.
\newblock {\em JEMS}, 17(9):2353--2378, 2015.

\bibitem[STZ17]{sabot-tarres-zeng2017}
C.~Sabot, P.~Tarr\`es, and X.~Zeng.
\newblock The vertex reinforced jump process and a random {S}chr\"odinger
  operator on finite graphs.
\newblock {\em Ann. Probab.}, 45(6A):3967--3986, 2017.

\bibitem[Swa20]{phdthesis-swan2020}
A.~Swan.
\newblock Superprobability on graphs ({PhD} thesis), 2020.

\bibitem[SZ19]{sabot-zeng15}
C.~Sabot and X.~Zeng.
\newblock A random {S}chr\"{o}dinger operator associated with the vertex
  reinforced jump process on infinite graphs.
\newblock {\em J. Amer. Math. Soc.}, 32(2):311--349, 2019.

\bibitem[Zir91]{zirnbauer-91}
M.R. Zirnbauer.
\newblock Fourier analysis on a hyperbolic supermanifold with constant
  curvature.
\newblock {\em Comm. Math. Phys.}, 141(3):503--522, 1991.

\end{thebibliography}
\end{document}